\newcommand\restr[2]{{
  \left.\kern-\nulldelimiterspace 
  #1 
  \vphantom{\big|} 
  \right|_{#2} 
  }}
\newcommand{\testleftlong}{\longleftarrow\!\shortmid}
\newcommand{\C}{\mathbb{C}}
\newcommand{\Q}{\mathbb{Q}}
\newcommand{\GL}{\mathrm{GL}}
\newcommand{\Sp}{\mathrm{Sp}}
\newcommand{\mf}{\mathfrak}
\newcommand{\g}{\mf{g}}
\newcommand{\h}{\mf{h}}
\newcommand{\so}{\mf{so}}
\renewcommand{\sp}{\mf{sp}}
\newcommand{\gl}{\mf{gl}}
\newcommand{\ssl}{\mf{sl}}
\newcommand{\uu}{\mf{u}}
\renewcommand{\a}{\mf{a}}
\renewcommand{\c}{\mf{c}}
\renewcommand{\o}{\mf{o}}
\newcommand{\wtt}[3]{{\small (\scriptstyle{#1};\scriptstyle{\mathbf{#2}};\scriptstyle{#3})}}
\newcommand{\wts}[2]{{({\scriptstyle{#1}};{\scriptstyle{#2}})}}
\newcommand{\z}{\mf{z}}
\newcommand{\ttt}{\mf{t}}
\newcommand{\End}{\mathrm{End}}
\newcommand{\ad}{\mathrm{ad}}
\newcommand{\G}{\mathcal{G}}
\renewcommand{\ggg}{\mathrm{g}}
\renewcommand{\O}{\mathrm{O}}
\newcommand{\SO}{\mathrm{SO}}
\renewcommand{\Sp}{\mathrm{Sp}}
\newcommand{\Aut}{\mathrm{Aut}}
\newcommand{\wG}{\widehat{G}}
\numberwithin{equation}{section}
\newtheorem{theorem}{Theorem}[section]
\newtheorem{lemma}[theorem]{Lemma}
\theoremstyle{remark}
\theoremstyle{remark}
\newtheorem{rmk}[theorem]{Remark}
\newtheorem{example}[theorem]{Example}   
\title[Computing component groups]{Computing component groups of stabilizers of nilpotent orbit representatives}
\author{Emanuele Di Bella}
\address{
Dipartimento di Matematica\\
Universit\`{a} di Trento\\
Italy}
\email{emanuele.dibella@unitn.it}
\author{Willem A. de Graaf}
\address{
Dipartimento di Matematica\\
Universit\`{a} di Trento\\
Italy}
\email{willem.degraaf@unitn.it}
\date{}
\begin{document}


\def\DynkinNodeSize{1.5mm}
\def\DynkinArrowLength{1.5mm}
\tikzset{
dnode/.style={
circle,
inner sep=0pt,
minimum size=\DynkinNodeSize,
fill=white,
draw},
middlearrow/.style={
decoration={markings,
mark=at position 0.6 with
{\draw (0:0mm) -- +(+135:\DynkinArrowLength); \draw (0:0mm) -- +(-135:\DynkinArrowLength);},
},
postaction={decorate}
},
leftrightarrow/.style={
decoration={markings,
mark=at position 0.999 with
{
\draw (0:0mm) -- +(+135:\DynkinArrowLength); \draw (0:0mm) -- +(-135:\DynkinArrowLength);
},
mark=at position 0.001 with
{
\draw (0:0mm) -- +(+45:\DynkinArrowLength); \draw (0:0mm) -- +(-45:\DynkinArrowLength);
},
},
postaction={decorate}
},
sedge/.style={
},
dedge/.style={
middlearrow,
double distance=0.5mm,
},
tedge/.style={
middlearrow,
double distance=1.0mm+\pgflinewidth,
postaction={draw}, 
},
infedge/.style={
leftrightarrow,
double distance=0.5mm,
},
}


\begin{abstract}
We describe computational methods for computing the component group of the
stabilizer of a nilpotent element in a complex simple Lie algebra. Our
algorithms have been implemented in the language of the computer algebra system
{\sf GAP}4. Occasionally we need Gr\"obner basis computations; for
these we use the systems {\sc Magma} and {\sc Singular}.
The resulting component groups have been
made available in the {\sf GAP}4 package {\sf SLA}. 
\end{abstract}

\maketitle

\section{Introduction}

The theory of the nilpotent orbits of simple Lie algebras has seen tremendous
developments over the past decades. For overviews we refer to the book by
Collingwood and McGovern (\cite{colmcgov}) and the survey paper by Jantzen
(\cite{jannilp}). In many contexts an important role is played by the
stabilizer of a nilpotent element. Examples are the Springer correspondence
(\cite[\S 10.1]{colmcgov}, \cite[\S 13]{jannilp}) and the theory of
$W$-algebras (see, for example, \cite{losevW}). 
In characteristic 0 the identity component
of this stabilizer is determined by its Lie algebra, which is the centralizer
in the Lie algebra and thus can be determined by linear algebra. So what
remains is to determine the component group.

For the Lie algebras of classical
type these component groups can be determined by general considerations,
see \cite[\S 3]{jannilp} and Section \ref{sec:classical} below.
For the Lie algebras of exceptonal type the component groups have first been
determined by Alekseevskii, \cite{alekseevski} and later by Sommers,
\cite{sommers}. These references provide methods to determine the
isomorphism type of the component group, not directly the component group
itself. More precisely, with
the construction in \cite{sommers} it is possible to determine
an element in each conjugacy class of the component group; but these elements
lie in the stabilizer of different nilpotent elements in the same orbit.
In order to perform explicit computations with the component group one
often needs explicit generators of the component group of the stabilizer of
one fixed nilpotent element. In \cite{lawtest} such generators have been
determined by impressive hand calculations for the nilpotent orbits in the
Lie algebras of exceptional type. However, these use the specific form
of the chosen representative and their description depends on a particular
choice of the multiplication table of the simple Lie algebra at hand. Thus
they are not immediately usable for a different representative
or a Lie algebra given by a different multiplication table.

In this paper we develop computational methods to obtain the component group
of the centralizer of a nilpotent element in a simple Lie algebra
over $\C$. For the classical types there is a straightforward method that
directly translates the theoretical construction, see Section
\ref{sec:classical}. In Section \ref{sec:double} we devise a method for the
exceptional types using the double centralizer of an $\ssl_2$-triple. 
Our method gives an independent construction of the component
group, that is, it does not depend on the prior knowledge of the size
or isomorphism type of the component group. It is based on the following
observation: it is a basic fact that every nilpotent element is contained
in an $\ssl_2$-triple and that the component group of the centralizer of the
nilpotent element is equal to the component group of the centralizer of the
$\ssl_2$-triple (see Section \ref{sec:nilporb}). Let $\c$ denote the sum of
the centralizer of such an $\ssl_2$-triple and its double centralizer (that is,
the centralizer of the centralizer). Let $V$ denote a complement to $\c$
that is perpendicular to $\c$ with respect to the Killing form. Then by
direct case by case computation we established that $V$ is a direct sum
of pairwise non-isomorphic $\c$-modules. We then exploit the symmetries that
exist among the highest weights of the irreducible summands of $V$ along with
the symmetries of $\c$ (that is, its automorphisms) to construct all elements
of the component group of the centralizer of the $\ssl_2$-triple. 

A technical ingredient, described in Section \ref{sec:conj},
is an algorithm for finding an explicit automorphism conjugating two
nilpotent elements that lie in the same orbit. We start with a section with
preliminary material on semisimple Lie algebras and nilpotent orbits.

We have implemented our algorithms in the computational algebra system
{\sf GAP}4, \cite{gap4}, with the help of the {\sf SLA} package, \cite{sla}.
For the algorithm of Sections \ref{sec:conj}, \ref{sec:double} we need
Gr\"obner basis computations. For these we have used the systems {\sc Magma},
\cite{magma} and {\sc Singular}, \cite{DGPS}. For an introduction into
the technique of Gr\"obner bases we refer to \cite{clo}.
For the exceptional types we have constructed the component groups for
all nilpotent orbits, using the representatives given in \cite{gra14}.
The component groups have been made available in the latest release (version
1.6.2) of the {\sf SLA} package, \cite{sla}.

Throughout the paper we use basic notions of the theory of Lie algebras and
algebraic groups. For introductions into these we refer to the books
\cite{borel}, \cite{hum2}, \cite{hum}.

\section{Preliminaries}\label{sec:prelim}

\subsection{Automorphisms of a semisimple Lie algebra}\label{sec:pre1}

Let $\g$ be a semisimple Lie algebra over $\C$ and fix a Cartan subalgebra
$\h$ of $\g$. Let $\Phi\subset \h^*$ be the root system of $\g$ with respect
to $\h$. By $\ell = \dim\h$ we denote the rank of $\g$.
For a root $\alpha\in \Phi$ we denote the corresponding root
space by $\g_\alpha$, that is
$$\g_\alpha = \{ x\in \g\mid [h,x]=\alpha(h)x \text{ for all } h\in \h\}.$$

Let $\Delta=\{\alpha_1,\ldots,\alpha_\ell\}\subset \Phi$ be a basis of simple
roots. For $\alpha,\beta\in \Phi$ we write $\langle \alpha,\beta^\vee\rangle=
\tfrac{2(\alpha,\beta)}{(\beta,\beta)}$. The matrix $(\langle\alpha_i,
\alpha_j^\vee\rangle)_{i,j=1}^\ell$ is called the Cartan matrix of $\Phi$.

There are nonzero $h_i\in \h$, $x_{\alpha_i} \in \g_{\alpha_i}$, $x_{-\alpha_i}\in
\g_{-\alpha_i}$ for $1\leq i\leq\ell$ such that
\begin{equation}\label{eq:cangen}
[h_i,h_j]=0,\, [x_{\alpha_i},x_{-\alpha_j}] = \delta_{ij} h_i,\,
[h_j,x_{\pm\alpha_i}] = \pm\langle \alpha_i,\alpha_j^\vee\rangle x_{\pm \alpha_i}.
\end{equation}
        
These generate $\g$ and any set of elements of $\g$ satisfying
\eqref{eq:cangen} is called a {\em canonical generating set} of $\g$.

Let $\hat h_i^k$, $\hat x_{\pm\beta_i}^k$ for $k=1,2$ be two canonical generating
sets. That means that we only require that both sets satisfy the relations
\eqref{eq:cangen} with $\hat h_i^k$ in place of $h_i$ and $\hat x_{\pm \beta_i}^k$
in place of $x_{\pm\alpha_i}$. Then mapping $h_i^1 \mapsto h_i^2$,
$\hat x_{\pm\beta_i}^1\mapsto
\hat x_{\pm\beta_i}^2$ extends to a unique automorphism of $\g$
(\cite[Chapter IV, Theorem 3]{jac}).

Now let $h_i$, $x_{\pm\alpha_i}$, $1\leq i\leq \ell$ be a fixed canonical
generating set, with $x_{\pm\alpha_i}\in \g_{\pm\alpha_i}$ as above. 
Let $\pi$ be a permutation of
$\{1,\ldots,\ell\}$ such that $\langle \alpha_i,\alpha_j^\vee\rangle =
\langle \alpha_{\pi(i)},\alpha_{\pi(j)}^\vee\rangle$ for all $i,j$. Then
it follows that there is a unique automorphism $\sigma_{\pi}$ of
$\g$ such that $\sigma_\pi(h_i) = h_{\pi(i)}$, $\sigma_\pi(x_{\pm\alpha_i})
= x_{\pm \alpha_{\pi(i)}}$. Such an automorphism is called a {\em diagram
automorphism} of $\g$. It is clear that $\sigma_{\pi_1\pi_2}= \sigma_{\pi_1}
\sigma_{\pi_2}$. Hence we get a finite group of diagram automorphisms of $\g$
denoted $\Gamma$.

An $x\in \g$ is said to be nilpotent if the map $\ad x : \g\to\g$, with
$\ad x(y) =[x,y]$, is nilpotent. If $x\in \g$ is nilpotent then $\exp(\ad x)$
is an automorphism of $\g$. Let $G$ be the group generated by all such
automorphisms. Then $G$ is a connected algebraic subgroup of $\GL(\g)$ whose
Lie algebra is equal to $\ad \g =\{ \ad y\mid y\in \g\}$. The group $G$ is
called the {\em inner automorphism group} of $\g$. Furthermore we have
$$\Aut(\g) = G\rtimes \Gamma$$
(\cite[\S IX.4]{jac}, \cite[\S 4, Theorem 1]{onishchik}). Hence $G$ is the
identity component of the algebraic group $\Aut(G)$ and its components are
$G\gamma$ where $\gamma$ runs over $\Gamma$.

\begin{rmk}\label{rem:outer}
Let $\sigma$ be an automorphism of finite order of $\g$. Let
$\g_0 = \{ x\in \g\mid \sigma(x)=x\}$. Then $\sigma$ is inner if and only
if $\g_0$ has a Cartan subalgebra that is also a Cartan subalgebra of $\g$.
(This is well known; it can be shown by observing that $\sigma$ is semisimple,
hence if $\sigma$ is inner it must lie in a maximal torus of $G$, whose Lie algebra
is a Cartan subalgebra which is pointwise fixed by $\sigma$).
We see that for finite order automorphisms we have a
straightforward algorithm for deciding whether they are inner or not.

There is a more general algorithm to check whether a given element of
$\GL(n,\C)$ lies in a connected algebraic subgroup $H$ of $\GL(n,\C)$,
using only the Lie algebra of $H$. This method is outlined in
\cite[Remark 5.8]{borwdg}. We need this algorithm because some of our
constructions may give too many elements of the component group, and we need
a method
to test whether two elements are equal modulo the identity component.
\end{rmk}

\subsection{Nilpotent orbits}\label{sec:nilporb}

Let $e\in \g$ be nilpotent. Let $G$ be the inner automorphism group of $\g$.
Then the orbit $G\cdot e$ consists of nilpotent
elements and is therefore called a {\em nilpotent orbit} of $\g$. We refer to
\cite{colmcgov}, \cite{jannilp} for overviews of the theory of such orbits.
The Jacobson-Morozov theorem states that for a nilpotent $e\in \g$ there are
$h,f$ such that
$$[h,e]=2e,\, [h,f]=-2f,\, [e,f]=h.$$
The triple $(h,e,f)$ is called an $\ssl_2$-triple.

Let $h_1,\ldots,h_\ell$ be the elements of a canonical generating set, as in
Section \ref{sec:pre1}. Let $\h_\Q$ be the $\Q$-vector space spanned by
$h_1,\ldots, h_\ell$. Then $\h_\Q$ is equal to the space of all $h\in \h$
having rational eigenvalues. The element $\ad h$
has integral eigenvalues, and therefore $h\in \h_\Q$. 

Let $e,e'\in \g$ be nilpotent lying in $\ssl_2$-triples $(h,e,f)$,
$(h',e',f')$. Then $e,e'$ lie in
the same $G$-orbit if and only if there is a $\sigma\in G$ such that
$\sigma(h)=h'$, $\sigma(e)=e'$, $\sigma(f)=f'$ if and only if
there is a $\sigma\in G$ with $\sigma(h) = h'$ (\cite[Theorem 8.1.4]{gra16}).
We also remark that there are efficient algorithms to decide whether
$e,e'$ lie in the same $G$-orbit (see \cite[\S 8.2]{gra16}). 

Let $e\in \g$ be nilpotent lying in an $\ssl_2$-triple $(h,e,f)$.
Let $Z_G(e) = \{\sigma\in G\mid \sigma(e)=e\}$ denote the stabilizer of
$e$ in $G$. Similarly we set $Z_G(h,e,f) = \{\sigma\in G\mid \sigma(h)=h,\,
\sigma(e)=e,\,\sigma(f)=f\}$. Then we have
$$Z_G(e) = Z_G(h,e,f) \ltimes U$$
where $U$ is a unipotent group. Moreover, representatives of the component
group of $Z_G(h,e,f)$ are also representatives of the component group
of $Z_G(e)$ (\cite[Lemma 3.7.3]{colmcgov}).

Set $\z_\g(h,e,f)=\{x\in\g\mid [x,h]=[x,e]=[x,f]=0\}$. Then $\{\ad x \mid
x\in \z_\g(h,e,f)\}$ is the Lie algebra of $Z_G(h,e,f)$ (this is well-known,
it follows for example from \cite[Corollary 4.2.8]{gra16}). Occasionally we
also say that $\z_\g(h,e,f)$ is the Lie algebra of $Z_G(h,e,f)$. 

\subsection{Irreducible modules of reductive Lie algebras}\label{sec:red}

Here we recall some well known facts on irreducible representations of
reductive Lie algebras. First we recall that a complex Lie algebra $\c$ is
{\em reductive} if $\c = \c'\oplus \z(\c)$ where $\c'=[\c,\c]$ is semisimple and
$\z(\c)$ is the centre of $\c$ (cf. \cite[\S 19.1]{hum}).

Let $\c$ be a reductive Lie algebra and $\rho : \c \to \gl(V)$ a finite
dimensional representation such that $\rho(x)$ is semisimple for all $x\in
\z(\c)$. Then $V$ is completely reducible, that is, it is a direct sum of
irreducible $\c$-modules (see \cite[Chapter III, Theorem 10]{jac}).

Let $\rho : \c \to \gl(V)$ be irreducible such that $\rho(x)$ is semismple  
for all $x\in \z(\c)$. The structure theory of irreducible
representations of semisimple Lie algebras (see, e.g., \cite[\S 20]{hum})
generalizes directly to the case of reductive Lie algebras. Here we give a 
short overview. Let $\a$ be
a Cartan subalgebra of $\c$ then $\a = \a'\oplus \z(\c)$, where $\a'$
is a Cartan subalgebra of $\c'$. We consider the root system of $\c'$ with
respect to $\a'$ and let $h_1,\ldots,h_r$, $x_1,\ldots,x_r$,
$y_1,\ldots,y_r$ be a canonical generating set (here $h_i\in \a'$,
$x_i$, $y_i$ are root vectors corresponding to the simple roots,
repectively the negative simple roots). Let $h_{r+1},\ldots,h_\ell$ be a basis
of $\z(\c)$, so that $h_1,\ldots,h_\ell$ is a basis of $\a$. For $\mu\in
\a^*$ we set
$$V_\mu = \{ v\in V \mid \rho(h)v = \mu(h)v \text{ for all } h\in \a\}.$$
If $V_\mu\neq 0$ then we say that $\mu$ is a {\em weight} of $V$ and
$V_\mu$ is the corresponding {\em weight space}. Any element of $V_\mu$ is
called a {\em weight vector} of weight $\mu$. A weight $\lambda$ of $V$ is
said to be a highest weight if $\rho(x_i)V_\lambda =0$ for $1\leq i\leq r$.
Because $\rho$ is irreducible, $V$ has a unique highest weight $\lambda$
and $\dim V_\lambda=1$. A nonzero $v_0\in V_\lambda$ is called a
{\em highest weight vector} of $V$. Fixing such a $v_0$ we have that the
elements
$$\rho(y_{i_1})\cdots \rho(y_{i_k}) v_0\text{ for } k\geq 0, 1\leq i_j\leq r$$
span $V$.

Let now $\rho$ be not necessarily irreducible, such that $\rho(x)$ is
semisimple for all $x\in \z(\c)$. The above description of a spanning set
of an irreducible module gives a straightforward method to find a decomposition
of $V$ as a direct sum of irreducible modules. This works as follows. First
we compute the space $V_0 = \{ v\in V \mid \rho(x_i)v = 0 \text{ for }
1\leq i\leq r\}$. Secondly, we decompose $V_0$ as a direct sum of weight spaces.
Thirdly, for each weight space we fix a basis. Those basis vectors are
exactly the highest weight vectors of the different irreducible modules in
the decomposition.

We say that $\rho$ (or the $\c$-module $V$) is {\em multiplicity free} if
the dimension of each weight space in $V_0$ is 1. This is the same as
saying that $V$ decomposes as a sum of pairwise non-isomorphic irreducible
modules. In that case this decomposition is uniquely determined.

\subsection{Notation}

Throughout this paper we will freely use the notation introduced in this
section. In particular $\g$ is a simple Lie algebra with Cartan subalgebra
$\h$ and root system $\Phi$ with fixed set of simple roots $\Delta = \{
\alpha_1,\ldots,\alpha_\ell\}$. We let $\ad : \g \to \gl(\g)$ be the
adjoint map of $\g$ and $G$ will be the inner automorphism group of $\g$.
If $\uu$ is a subalgebra of $\g$ and $x\in \uu$ then by $\ad_\uu x :
\uu\to\uu$ we will denote the restriction of $\ad x$ to $\uu$.

For an algebraic group $H$ we denote its identity component by $H^\circ$; hence
its component group is $H/H^\circ$. 

\section{Simple Lie algebras of classical type}\label{sec:classical}

In this section we let $\g$ be a simple Lie algebra of classical type,
that is, of type $A_\ell$ ($\ell\geq 1$), $B_\ell$ ($\ell\geq 2$),
$C_\ell$ ($\ell\geq 3$), $D_\ell$ ($\ell\geq 4$). For the Lie algebras of
type $A_\ell$ the stabilizer $Z_G(h,e,f)$ is always connected (see
\cite[\S 3.5]{sommers}) so we deal with the Lie algebras of type $B$, $C$, $D$.
For these algebras we describe an algorithm to obtain the component group of
$Z_G(h,e,f)$ closely following the construction in \cite[\S 3.1]{jannilp}.

We will focus on the Lie algebras of type $B_\ell$ and $D_\ell$; in Remark
\ref{rem:Cl} we indicate what has to be changed for type $C_\ell$. We will
first work with the groups $O(n,\varphi)$ (defined below) instead of
the inner automorphism group $G$. Later we will pass to $G$. 

Let $V$ be an $n$-dimensional vector space over $\C$. Let $\varphi : V\times
V\to \C$ be a nondegenerate bilinear form. Set
$$\G(V,\varphi) = \{ g\in \GL(V) \mid \varphi(gv,gw) = \varphi(v,w)
\text{ for all } v,w\in V\}.$$
This is an algebraic subgroup of $\GL(V)$ with Lie algebra
$$\ggg(V,\varphi) = \{ x\in \End(V) \mid \varphi(xv,w)+\varphi(v,xw)=0
\text{ for all } v,w\in V\}.$$
The group $\G(V,\varphi)$ acts on the Lie algebra $\ggg(V,\varphi)$ by
$g\cdot x = gxg^{-1}$. This yields a map from $\G(V,\varphi)$
to the automorpism group of $\ggg(V,\varphi)$.

If $\varphi$ is symmetric (that is $\varphi(v,w) = \varphi(w,v)$ for all
$v,w\in V$) then $\G(V,\varphi)=\O(V,\varphi)$ is the orthogonal group and
we write $\o(V,\varphi) = \ggg(V,\varphi)$ for its Lie algebra. If $\varphi$
is alternating (that is, $\varphi(v,w) = -\varphi(w,v)$ for all $v,w\in V$)
then we write $\Sp(V,\varphi) = \G(V,\varphi)$ which is the symplectic
group. In that case we write $\sp(V,\varphi)=\ggg(V,\varphi)$.

In the remainder of this section we let $\varphi$ be symmetric.
If $n\geq 5$ is odd then $\o(V,\varphi)$ is  simple
of type $B_\ell$ where $n=2\ell +1$. If $n\geq 8$ is even then it is simple
of type $D_\ell$ where $n=2\ell$. 

The group $\O(V,\varphi)$ has two connected components. The identity
component is $\mathrm{SO}(V,\varphi) = \{ g\in \O(V,\varphi) \mid
\det(g)=1\}$. The second component is $\{  g\in \O(V,\varphi) \mid
\det(g)=-1\}$. We now describe a simple method to find an element in this
second component. Let $v_1,\ldots,v_n$ be a basis of $V$. Let $w_n\in V$
be such that $\varphi(w_n,w_n)\neq 0$. This element can be found as follows:
if there is an $i$ with $\varphi(v_i,v_i)\neq 0$ then we set $w_n=v_i$.
Otherwise there are $i<j$ with $\varphi(v_i,v_j)\neq 0$ and we set
$w_n=v_i+v_j$. Let $w_1,\ldots,w_{n-1}\in V$ be a basis of the subspace
$\{v\in V \mid \varphi(w_n,v)=0\}$. Then $w_1,\ldots,w_n$ is a basis of $V$.
Let $g\in \GL(V)$ be the map that sends $w_i\mapsto w_i$ for $1\leq i
\leq n-1$ and $w_n\mapsto -w_n$. Then $\varphi(gw_i,gw_j)=\varphi(w_i,w_j)$
(for $i,j\leq n-1$ and $i=j=n$ this is clear, for $i\leq n-1$, $j=n$ both are
0). Hence $g\in \O(V,\varphi)$ and $\det(g)=-1$.

Let $(h,e,f)$ be an $\ssl_2$-triple in $\o(V,\varphi)$ and let $\a$ be the
subalgebra
spanned by them. Then $\a$ acts on $V$ and $V$ splits as a direct sum of
irreducible $\a$-modules, $V=V_1\oplus\cdots \oplus V_m$. Write
$d_i=\dim V_i$. From the representation theory of $\ssl_2$
(\cite[\S II.7]{hum}) it follows that
each $V_i$ has a unique (up to nonzero scalar multiples)
element $\hat v_i$ such that $f\cdot \hat v_i = 0$, $h\cdot \hat v_i=
(-d_i+1)\hat v_i$,
and $\{e^k \cdot \hat v_i\mid 0\leq k\leq d_i-1\}$ forms a basis of $V_i$,
and $e^{d_i} \cdot \hat v_i=0$.
For $s\geq 1$ we let $M_s$ be the space spanned by all $\hat v_i$ such that
$d_i=s$. Note that the direct sum decomposition of $V$ is not unique in
general, but the space $M_s$ is uniquely determined.

Write $\wG = \O(V,\varphi)$. Let $g\in Z_{\wG}(h,e,f)$ then $g$ stabilizes each
space $M_s$. Moreover $g\cdot (e^k\cdot \hat v_i) = e^k (g\cdot \hat v_i)$.
It follows that the action of $g$ on each $M_s$ determines $g$. So we get
an injective homomorphism
\begin{equation}\label{fun}
Z_{\wG}(h,e,f) \to \GL(M_1)\times \GL(M_2) \cdots  .
\end{equation}

On the space $M_s$ define a bilinear form $\psi_s$ by $\psi_s(v,w) =
\varphi(v,e^{s-1}w)$. In \cite[\S 3.7]{jannilp} it is shown that $\psi_s$ is
symmetric if $s$ is odd and alternating if $s$ is even. So for $s$ odd we
get the orthogonal group $\O(M_s,\psi_s)$ and for $s$ even we get the
symplectic group $\Sp(M_s,\psi_s)$. Now \cite[\S 3.8, Proposition 2]{jannilp}
states that the above injective homomorphism is an isomorphism onto
\begin{equation}\label{eq:prod}
\prod_{s \text{ odd}} \O(M_s,\psi_s) \times \prod_{s \text{ even}} \Sp(M_s,\psi_s).
\end{equation}  

For $t$ odd let $\hat g_t$ be a fixed element in the non-identity component
of $\O(M_t,\psi_t)$. Above we have outlined how to find such an element.
Let $g_t$ be the preimage of $1\times \cdots \times 1 \times \hat g_t \times 1
\times \cdots 1$ (where in the product decomposition \eqref{eq:prod} we
put a 1 for all $s\neq t$). Then from the considerations above it follows that
the component group of $Z_{\wG}(h,e,f)$ is the elementary abelian 2-group
generated by $g_t$ for all odd $t$.

Setting $\SO(V,\varphi) = \{g\in \O(V,\varphi) \mid \det(g)=1\}$ we
have that $\SO(V,\varphi)$ is connected and the map $\O(V,\varphi)\to \Aut(
\so(V,\varphi))$ restricts to a surjective map $\SO(V,\varphi) \to G$
(where, as usual, $G$ is the identity component of $\Aut(\so(V,\varphi))$).
It follows that the component group of $\{ g\in Z_{\wG}(h,e,f) \mid \det(g)=1\}$
surjects to the component group of $Z_{G}(h,e,f)$.

This leads to an immediate algorithm for computing the component group of
$Z_G(h,e,f)$ of an $\ssl_2$-triple in $\o(V,\varphi)$, which we summarize
in the following steps:

\begin{enumerate}
\item Let $\a$ be the subalgebra spanned by $h,e,f$. Compute the direct sum
  decomposition of the $\a$-module $V=V_1\oplus \cdots \oplus V_m$.
\item In each $V_i$ compute the unique (up to scalar multiples) $\hat v_i$
  such that $f\cdot \hat v_i=0$.
\item For $s\geq 1$ let $M_s$ be the space spanned by the $\hat v_i$ such
  that $\dim V_i=s$.
\item For each odd $s$ compute the reflection $\hat g_s\in \O(M_s,\psi_s)$ with
  determinant -1.
\item Construct $g_s\in \O(V,\varphi)$ in the following way:
  \begin{enumerate}
  \item For each $i$ with $\dim V_i = s$ compute a basis of $V_i$ of the form
    $\hat v_i, e\cdot \hat v_i,\ldots, e^{s-1}\cdot \hat v_i$.
  \item Set $g_s\cdot (e^k\hat v_i) = e^k\cdot (\hat g_s\cdot \hat v_i)$.
  \item Let $g_s$ be the identity on the $V_j$ with $\dim V_j\neq s$.  
  \end{enumerate}
\item Let $\mathcal{H}$ be the group generated by all $g_s$, for $s$ odd.
  Compute
  $\mathcal{H}_1 = \{ g\in \mathcal{H} \mid \det(g)=1\}$.
\item For $g\in \mathcal{H}_1$ compute the automorphism $\sigma_g\in
  \Aut(\o(V,\varphi))$
  given by $\sigma_g(x) = gxg^{-1}$.
\item Compute the centralizer $\z_\g(h,e,f)$ where $\g= \o(V,\varphi)$.
  By the algorithm indicated in Remark \ref{rem:outer} remove the $\sigma_g$
  that lie in the identity component of $Z_G(h,e,f)$ where $G$ is the
  identity component of $\Aut(\g)$.
\end{enumerate}

We have implemented this algorithm in the language of the computational
algebra system {\sf GAP}4. It works without particular problems. For example,
for the simple Lie algebra of type $B_{10}$ which is isomorphic to
$\o(21,\varphi)$ the program
needed 182 seconds to compute the component groups for all 195 nilpotent orbits.

\begin{example}
        Here we show how the algorithm works for a simple Lie algebra of type $B_2$. Let $V=\mathbb{C}^5$ and $\varphi_A(v,w)=\prescript{t}{}vAw$, where $v,w\in\mathbb{C}^5$ and $A$ is the $5\times 5$ matrix such that $A_{i,5-i+1}=1$ and $A_{i,j}=0$ elsewhere. Observe that $\varphi_A$ is symmetric, since $A$ is a symmetric matrix, so that $\widehat{G}=\mathcal{G}(\mathbb{C}^5,\varphi_A)=O(\mathbb{C}^5,\varphi_A)$ and its Lie algebra is $\ggg(\mathbb{C}^5,\varphi_A)=\mathfrak{so}(\mathbb{C}^5,\varphi_A)$, which is indeed of type $B_2$. We fix the following $\mathfrak{sl}_2$ triple:$$e=\begin{pmatrix}
        0 & 0 & 1 & 0 & 0\\
        0 & 0 & 0 & 0 & 0\\
        0 & 0 & 0 & 0 & -1\\
        0 & 0 & 0 & 0 & 0\\
        0 & 0 & 0 & 0 & 0\\
    \end{pmatrix},\hspace{0.3cm} h=\begin{pmatrix}
        2 & 0 & 0 & 0 & 0\\
        0 & 0 & 0 & 0 & 0\\
        0 & 0 & 0 & 0 & 0\\
        0 & 0 & 0 & 0 & 0\\
        0 & 0 & 0 & 0 & -2\\
    \end{pmatrix},\hspace{0.3cm} f=\begin{pmatrix}
        0 & 0 & 0 & 0 & 0\\
        0 & 0 & 0 & 0 & 0\\
        2 & 0 & 0 & 0 & 0\\
        0 & 0 & 0 & 0 & 0\\
        0 & 0 & -2 & 0 & 0\\
    \end{pmatrix}.$$ At this point we can easily compute the action of the nilpotent element $e$ on the canonical basis $\{v_1,\dots,v_5\}$ of $\mathbb{C}^5$: $ev_5=-v_3$, $ev_3=v_1$, $ev_1=0$, $ev_4=0$ and $ev_2=0$. Under the action of the triple, $\mathbb{C}^5$ decomposes as $\mathbb{C}\oplus\mathbb{C}\oplus\mathbb{C}^3$ and $v_2, v_4, v_5$ are the corresponding minimal vectors. We then have $M_1=\langle v_2, v_4\rangle$ and $M_3=\langle v_5\rangle$ and we fix the respective bases $\mathcal{B}_1=\{v_2,v_4\}$ and $\mathcal{B}_3=\{v_5\}$. At this point we need to compute the reflections: since $M_3$ is $1$-dimensional, $\hat{g}_3$ will just map $v_5\to -v_5$; for $\hat{g}_1$ we start observing that $\restr{\varphi_A}{M_1}(v_2,v_2)=\restr{\varphi_A}{M_1}(v_4,v_4)=0$. Then we set $w_1:=v_2-v_4$ and $w_2:=v_2+v_4$ and consider the map that sends $w_1\to w_1$, $w_2\to -w_2$. In this way, it is straightforward to get that $\hat{g}_3$ has to be the $2\times 2$ matrix with zero diagonal and $-1$ elsewhere (it is just the matrix associated to the endomorphism with respect to $\mathcal{B}_1$, it can be computed by basic linear algebra techniques). At this point we can use the isomorphism \eqref{fun}, \eqref{eq:prod}: \vspace{0.5cm}$$\begin{matrix}
        Z_{\widehat{G}}(h,e,f) & \xlongrightarrow{\sim} & O(M_1,\restr{\varphi_A}{M_1}) & \times & O(M_3,\restr{\varphi_A}{M_3})\\
        & & & &\\
        h_1:=\begin{pmatrix}
        1 & 0 & 0 & 0 & 0\\
        0 & 0 & 0 &-1 & 0\\
        0 & 0 & 1 & 0 & 0\\
        0 & -1 & 0 & 0 & 0\\
        0 & 0 & 0 & 0 & 1\\
        \end{pmatrix} & \testleftlong & \hat{g}_1=\begin{pmatrix}
            0 & -1\\
            -1 & 0
        \end{pmatrix} & \times & id_{M_3}=\begin{pmatrix}
            1
        \end{pmatrix}\\
        & & & &\\
                h_3:=\begin{pmatrix}
        -1 & 0 & 0 & 0 & 0\\
        0 & 1 & 0 & 0 & 0\\
        0 & 0 & -1 & 0 & 0\\
        0 & 0 & 0 & 1 & 0\\
        0 & 0 & 0 & 0 & -1\\
        \end{pmatrix} & \testleftlong & id_{M_1}=\begin{pmatrix}
            1 & 0\\
            0 & 1
        \end{pmatrix} & \times & \hat{g}_3=\begin{pmatrix}
            -1
        \end{pmatrix}\\
        
    \end{matrix}
$$
We get that $\langle h_1,h_3\rangle=\{id,h_1,h_3,h_1h_3\}\simeq C_2\times C_2$
is the component group of $Z_{\widehat{G}}(h,e,f)$.        
Following the final steps of our algorithm, it is then straightforward to see
that $\{id, h_1h_3\}\simeq C_2$ is the component group of $Z_G(h,e,f)$ (just observe that $h_1h_3$ is the only non trivial element with determinant $1$).
\end{example}

\begin{rmk}\label{rem:Cl}
Suppose that $\varphi$ is alternating, and hence the group we consider is
$\Sp(V,\varphi)$. Then everything works in the same way. The main difference
is that in this case $\psi_s$ is symmetric if $s$ is even and alternating if
$s$ is odd. So we get the same algorithm, except that the roles of $s$ even/odd
are interchanged.
\end{rmk}

\section{Conjugacy of $\ssl_2$-triples}\label{sec:conj}

In this section we let $\g$ be an arbitrary complex semisimple Lie algebra.
We use the notation introduced in Section \ref{sec:prelim}. In particular,
we let $\h$ denote a fixed Cartan subalgebra and denote the corresponding
root system by $\Phi$. We also fix a basis of simple roots $\Delta = \{
\alpha_1,\ldots,\alpha_\ell\}$. 

Throughout this section we 
let $(h_i,e_i,f_i)$, $i=1,2$, be two $\ssl_2$-triples in $\g$ where $e_1,e_2$ lie
in the same $G$-orbit. As remarked in Section \ref{sec:nilporb} this implies
that there exist $\sigma\in G$ with $\sigma(h_1)=h_2$, $\sigma(e_1)=e_2$,
$\sigma(f_1)=f_2$. In this section we describe computational methods to
find such a $\sigma$. We note that it
suffices to find $\sigma$ with $\sigma(h_1)=h_2$ and $\sigma(e_1)=e_2$ because
then automatically $\sigma(f_1)=f_2$ (\cite[Lemma 8.1.1]{gra16}).

First we assume that $h_1,h_2\in \h$.

The Weyl group $W$ of $\Phi$ acts on $\h$ in the following way.
For $\alpha\in \Phi$ there exists a unique $h_\alpha\in [\g_\alpha,\g_{-\alpha}]$
such that $[h_\alpha,x_{\pm\alpha}] = \pm 2x_{\pm\alpha}$.
Define $s_\alpha^\vee : \h\to\h$ by $s_\alpha^\vee(h) = h-\alpha(h) h_\alpha$.
Then the group generated by the $s_\alpha^\vee$ is isomorphic to $W$ (cf.
\cite[Remark 2.9.9]{gra16}). This action can also be realized differently.
Let $h_i$, $x_{\pm \alpha_i}$ for $1\leq i\leq\ell$ be a canonical generating set
of $\g$. For $t\in \C^\times$ define 
$w_{\alpha_i}(t) = \exp( t\ad x_{\alpha_i} ) \exp(-t^{-1}\ad x_{-\alpha_i})
\exp( t\ad x_{\alpha_i} )$ and set $\dot{s}_{\alpha_i}=w_{\alpha_i}(1)$. 
Then from \cite[Lemma 5.2.13]{gra16} it follows
that $\dot{s}_{\alpha_i}(h) = s_{\alpha_i}^\vee(h)$ for $h\in \h$. We also remark that
the action of $W$ preserves the space $\h_\Q$.

Let $h\in \h_\Q$ then there is a unique $\hat h$ in the $W$-orbit of $h$
such that $\alpha_i(h) \geq 0$ for $1\leq i\leq \ell$. (This is shown in the
same way as the corresponding statement for weights, see
\cite[Lemma 13.2A]{hum}). This element can be found in the following way. We define a sequence $h_1,\ldots,h_r$. Set $h_1=h$ and for $k\geq 1$ we do
the following. If $\alpha_i(h_k) \geq 0$ for all $i$ then we set $r=k$ and we
stop. Otherwise we set $h_{k+1} = s_{\alpha_i}^\vee(h_k)$ where $i$ is such that
$\alpha_i(h_k)<0$. We also set $k:= k+1$ and continue. A first observation is
that this sequence is always finite. Indeed,
we have a partial order on $\h_\Q$ defined by $u \leq u'$ if $u'-u =
\sum a_i h_i$ with $a_i\in \Q$ and $a_i\geq 0$ for all $i$. It follows
that $h_{k+1} >h_k$ for $k\geq 1$. Since the Weyl group is finite and we
compute an increasing series in the partial order, the sequence has to be
finite. It is clear that $\hat h=h_r$ is the element that we require.

Two elements of $\h$ are $G$-conjugate if and only if they are $W$-conjugate
(\cite[Corollary 5.8.5]{gra16}). So $h_1,h_2$ are $W$-conjugate. By the above
procedure we can compute $w_1,w_2\in W$ such that $\hat h_i = w_i(h)$.
By the uniqueness of the $\hat h_i$ it follows $\hat h_1 = \hat h_2$. 
Then $w_2^{-1}w_1(h_1)=h_2$. Because the action of the simple reflection
$s_{\alpha_i}$ is induced by $\dot{s}_{\alpha_i}$ and every $w\in W$ is a product of
simple reflections, we can thus find a $\tau\in G$ such that
$\tau(h_1)=h_2$. 

Set $h_1'=\tau(h_1)=h_2$, $e_1'=\tau(e_1)$, $f_1'=\tau(f_1)$. We want to find
an element in $G$ that stabilizes $h_2$ and maps $e_1'$ to $e_2$. For that
we consider the
group $Z_G(h_2) = \{ g\in G \mid g(h_2) = h_2\}$. This group is connected
by \cite[Corollary 3.11]{steinberg75}. Its Lie algebra is
$\z_\g(h_2) = \{ x\in \g \mid [x,h_2]=0\}$. Let $\Psi = \{\alpha\in \Phi
\mid \alpha(h_2) = 0\}$. Then $\Psi$ is a root subsystem of $\Phi$ and 
$$\z_\g(h_2) = \h \oplus \bigoplus_{\alpha\in\Psi} \g_\alpha.$$
In particular, $\z_\g(h_2)$ is reductive. Now we consider the Bruhat
decomposition of $Z_G(h_2)$. This is described as follows.
Let $W_0$ be the Weyl group of $\Psi$. Let $\Pi = \{\beta_1,\ldots,
\beta_r\}$ be a fixed basis of simple roots of $\Psi$. Let $x_{\pm \beta_i}$ be
the root vectors in a canonical generating set of (the semisimple part of)
$\z_\g(h_2)$. For $1\leq i\leq r$ define $\dot{s}_{\beta_i}$ as above. For
$w\in W_0$ let $w=s_{\beta_{i_1}}\cdots s_{\beta_{i_k}}$ be a fixed reduced
expression and define $\dot{w} = \dot{s}_{\beta_{i_1}}\cdots \dot{s}_{\beta_{i_k}}$.
Let $\{\beta_1,\ldots,\beta_m\}$ be the positive roots of $\Psi$
(corresponding to $\Pi$). Let $U$ be the subgroup of $Z_G(h_2)$ consisting of
all
$$\exp(s_1 \ad x_{\beta_1})\cdots \exp(s_m \ad x_{\beta_m}) \text{ for }
s_1,\ldots,s_m\in \C.$$
For $w\in W_0$ let $\Psi_w$ be the set of all $\beta_i$, $1\leq i\leq m$,
such that $w(\beta_i)$ is a negative root. Write $\Psi_w = \{\beta_{i_1},
\ldots,\beta_{i_n}\}$. Then $U_w$ is defined to be the subgroup of $Z_G(h_2)$
consisting of all
$$\exp(u_1 \ad x_{\beta_{i_1}})\cdots \exp(u_n \ad x_{\beta_{i_n}}) \text{ for }
u_1,\ldots,u_n\in \C.$$
For $1\leq i\leq \ell$ and $t\in \C^\times$ define $h_i(t) = w_{\alpha_i}(t)
w_{\alpha_i}(1)^{-1}$. Let $H = \{h_1(t_1)\cdots h_\ell(t_\ell) \mid
t_i\in \C^\times\}$. Then $H$ is a maximal torus in $Z_G(h_2)$ with Lie
algebra $\h$. We have that $Z_G(h_2)$ is the disjoint union of the sets
$$C_w= UH\dot{w} U_w$$
where $w$ runs over $W_0$ (see e.g.,
\cite[Proposition 5.1.10, Theorem 5.8.3]{gra16}). Now in order to find our
element of $Z_G(h_2)$ we run over $W_0$. For each $w\in W_0$ we write
$g\in C_w$ as
\begin{multline*}
g=\exp(s_1 \ad x_{\beta_1})\cdots \exp(s_m \ad x_{\beta_m})h_1(t_1)\cdots
h_\ell(t_\ell) \dot{w}\cdot\\
\exp(u_1 \ad x_{\beta_{i_1}})\cdots \exp(u_n \ad x_{\beta_{i_n}})
\end{multline*}
where we take the $s_i$, $u_j$, $t_k$ to be indeterminates of
a polynomial ring. Then the equation $g(e_1)=e_2$ is tantamount to
a set of polynomial equations in these indeterminates. We add auxiliary
indeterminates $a_1,\ldots,a_\ell$ and the polynomial equations $a_it_i=1$ for
$1\leq i\leq \ell$ that express the invertibility of the $t_i$.
With the technique of Gr\"obner bases we check whether these have a solution.
If they do, we
find one, for example by computing a Gr\"obner basis with respect to
a lexicographical order and solving the corresponding triangular system
(for an introduction into this technique we refer to \cite[\S 3.1]{clo}). 
For at least one $w$ a solution must exist because $e_1'$ and $e_2$ are
$Z_G(h_2)$-conjugate.

So by composing $\tau$ with the element previously found we obtain
a $\sigma\in G$ with $\sigma(h_1)=h_2$, $\sigma(e_1)=e_2$. But then
necessarily also $\sigma(f_1)=f_2$ by \cite[Lemma 8.1.1]{gra16}.

The second case occurs when $h_1,h_2$ do not lie in $\h$. Then we first find
Cartan subalgebras
$\h_i$ containing $h_i$. We compute the root systems of $\g$ with respect to
the $\h_i$, and corresponding canonical sets of generators. Mapping these
canonical generators to our fixed set of canonical generators yields
automorphisms of $\g$ mapping $\h_i\to \h$. By the algorithm indicated in
Remark \ref{rem:outer} we can check whether these are inner. If they
are not we compose with a diagram automorphism that stabilizes $\h$.
This way we obtain elements of $G$ mapping $h_i$ into $\h$. Subsequently
we continue as above.

\begin{rmk}\label{rem:0-dim}
Let $\z_\g(h,e,f) = \{x\in \g\mid [x,h]=[x,e]=[x,f]=0\}$. This is the Lie
algebra of $Z_G(h,e,f)$. So if $\z_\g(h,e,f)=0$ then $Z_G(h,e,f)$ is a finite
group, equalling the component group. In that case we can find the component
group by applying the above algorithm with $(h_i,e_i,f_i) = (h,e,f)$ for
$i=1,2$ and finding all solutions of the resulting polynomial equations.

We can also consider the group $A=\Aut(\g)$ and try to find the component
group of $Z_A(h,e,f)$. We have that $G=A^\circ$ so  $\z_\g(h,e,f)$ is also
the Lie algebra of $Z_A(h,e,f)$. Therefore if  $\z_\g(h,e,f)=0$ also
$Z_A(h,e,f)$ is a finite group. We can find it by an extension of the previous
method in the following way. By listing the diagram automorphisms of
$\g$ we can find $\sigma_1,\ldots,\sigma_r\in A$ such that the components
of $A$ precisely are $G\sigma_i$ for $1\leq i\leq r$. Fix an $i$ with
$1\leq i\leq r$. Write $h'=\sigma_i(h)$, $e'=\sigma_i(e)$, $f'=\sigma_i(f)$.
By computing weighted Dynkin diagrams (see \cite[\S 8.2]{gra16})
we can check whether $e,e'$ are $G$-conjugate. If they are not we discard
$\sigma_i$. Otherwise by the above methods we compute the set of all $\phi\in
G$ such that $\phi (h',e',f')=(h,e,f)$ (note that this is a finite set
because the stabilizer of such a triple is finite by our assumption). Then
$\phi\sigma_i$ where $\phi$ runs over the previously computed set, is the
set of all elements of $G\sigma_i$ stabilizing $(h,e,f)$. 
\end{rmk}

\begin{example}
Let $\g$ be the simple Lie algebra of type $F_4$ and $e\in \g$ a representative
of the nilpotent orbit with Bala-Carter label $F_4(a_3)$
(see \cite[\S 8.4]{colmcgov}). Let $(h,e,f)$ be an $\ssl_2$-triple. Then
$\z_\g(h,e,f)=0$. The centralizer $\z_\g(h)$ is of type $A_1+A_2+T_1$ (where
the $T_1$ indicates a 1-dimensional centre). So  $\z_\g(h)$ has four positive
roots and $|W_0|=12$. Hence the method of this section yields 12 systems of
polynomial equations in 16 indeterminates. It took {\sc Magma} 552 seconds
to compute the Gr\"obner bases of all systems with respect to a lexicographical
order. From these Gr\"obner bases the solutions are readily determined. 
There are 24 solutions and the stabilizer of $(h,e,f)$ is isomorphic to $S_4$.
\end{example}

\begin{example}
Let $\g$ be the simple Lie algebra of type $E_8$ and $e\in \g$ a representative
of the nilpotent orbit with Bala-Carter label $E_8(a_7)$. Let $(h,e,f)$ be an
$\ssl_2$-triple. Then
$\z_\g(h,e,f)=0$. The centralizer $\z_\g(h)$ is of type $A_3+A_4+T_1$ (where
the $T_1$ indicates a 1-dimensional centre). So  $\z_\g(h)$ has 16 positive
roots and $|W_0|=2880$. Hence the method of this section yields 2880 systems of
polynomial equations in 48 indeterminates. With two exceptions {\sc Magma}
could not compute the Gr\"obner bases of these polynomial systems. For this
reason we resort to different methods.

The first of these is based on the operations of factorization and reduction.
For a set $P$ of multivariate polynomials denote the set of their common
zeros in $\C$ by $Z(P)$. By the division algorithm (see \cite[\S 2.3]{clo})
it is straightforward to compute a set $P'$ such that no leading monomial
of an element of $P'$ divides a leading monomial of another element of $P'$
and such that $P$, $P'$ generate the same ideal. We say that $P'$ is a
{\em reduction} of $P$. We have that $Z(P)=Z(P')$.

Let $Q$ be a set of polynomials; we want to determine $Z(Q)$. 
Throughout our procedure we let $A$ be a set of sets of polynomials such that
$Z(Q)$ is the union of all $Z(Q')$ for $Q'\in A$. Let $P\in A$ and suppose
that there is an $f\in P$ with a nontrivial factorization $f=f_1^{e_1}\cdots
f_k^{e_k}$ with $f_i$ irreducible. Then we replace $P$ in $A$ with reductions
of the sets $P_i = (P\setminus \{f\})\cup \{f_i\}$. If an element of $A$
contains a nonzero constant then we discard it. We continue this process until
no further factorizations are possible. This way we can decide for many of our
2880 polynomial systems that they have no common zeros. More precisely,
after performing this procedure we are left with 507 sets of polynomials
to be checked. 

Another idea is to look for elements of order 2 (in this case the component group is
isomorphic to $S_5$ which is generated by elements of order 2).
We construct elements of the form
\begin{multline*}
A = \exp(s_1 \ad x_{\beta_1})\cdots \exp(s_m \ad x_{\beta_m})
h_1(t_1)\cdots h_\ell(t_\ell) \dot{w}\\
\exp(u_1 \ad x_{\beta_{i_1}})\cdots \exp(u_n \ad x_{\beta_{i_n}}).
\end{multline*}
The inverse of such an element is
\begin{multline*}
A^{-1} = \exp(-u_n \ad x_{\beta_{i_n}}) \cdots \exp(-u_1 \ad x_{\beta_{i_1}})
\dot{w}^{-1} h_1(t_1^{-1})\cdots h_\ell(t_\ell^{-1})\\
\exp(-s_m \ad x_{\beta_m}) \cdots \exp(-s_1 \ad x_{\beta_1}).
\end{multline*}
The condition that $A$ be of order 2 is the same as $A=A^{-1}$ yielding a set of
polynomial equations in the same indeterminates for each $w\in W_0$.
To these sets we applied the factorization procedure as outlined above. We then
added these sets to the original sets for the same $w\in W$.
Like this we found one polynomial set whose solutions yielded six elements of
order two. These elements generate a group of order 120. Since in this case the
stabilizer is known to be of order 120, we are done. 
\end{example}  

\section{Using double centralizers}\label{sec:double}

In this section we outline a computational method for determining the
component group of a $Z_G(h,e,f)$. For this we use the centralizer
$\c_1=\z_\g(h,e,f)$ and the double centralizer $\c_2=\z_\g(\c_1)$. The sum
$\c=\c_1+\c_2$ is reductive in $\g$. We write $\g = \c\oplus V$, where
$V$ is a $\c$-module (see below for its definition). The method is based on
the observation that for the restriction of $\sigma\in Z_G(h,e,f)$ to
$\c$ there are a finite number of possibilities (modulo $Z_G(h,e,f)^\circ$).
A second observation is that the $\c$-module $V$ is multiplicity free.
This forces a $\sigma$ to permute the simple summands of $V$. Finally, from
its restriction to $\c$ and the way it permutes the simple summands of $V$ we
can construct $\sigma$.

Throughout we let $\g$ be a simple Lie algebra of exceptional
type. Secondly, we assume that $e$ is such that the component group is
nontrivial. It is known for which nilpotent orbits that happens and 
those are the cases of interest. Section \ref{sec:tables} has tables
listing data on the algebras $\c_1$, $\c_2$ and the module $V$ used to
underpin the method described in this section. However, we remark that
this method also works for the cases where the component group is trivial,
see Remark \ref{triv}. 

Let $\kappa : \g\times\g\to \C$ denote the Killing form of $\g$, that is,
$\kappa(x,y) = \mathrm{Tr}(\ad x \cdot \ad y )$. It is a fundamental fact that
$\kappa$ is non-degenerate (\cite[Theorem 5.1]{hum}). 
We say that
a subalgebra $\uu\subset \g$ is {\em reductive in} $\g$ if $\g$ is completely
reducible as $\uu$-module. We start with a lemma containing some well known
facts on such subalgebras.

\begin{lemma}\label{lemred}
Let $\uu$ be reductive in $\g$.
\begin{itemize}
\item $\uu = [\uu,\uu]\oplus\z(\uu)$ where $\z(\uu)$ is the centre of $\uu$;
  moreover, for $x\in \z(\uu)$ the map $\ad x$ is semisimple.
\item Let $\z_\g(\uu) = \{ x\in \g \mid [x,y]=0 \text{ for all } y\in \uu\}$
  be the centralizer of $\uu$ in $\g$. Then the restriction of $\kappa$
  to $\z_\g(\uu)$ is non-degenerate and $\z_\g(\uu)$ is reductive in $\g$.
\end{itemize}  
\end{lemma}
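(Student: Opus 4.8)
The first bullet is nearly a restatement of the definition of ``reductive in $\g$'': if $\g$ is completely reducible as a $\uu$-module then in particular $\uu$ itself is completely reducible under the adjoint action of $\uu$, which is one of the standard characterisations of a reductive Lie algebra, giving $\uu = [\uu,\uu]\oplus\z(\uu)$ with $[\uu,\uu]$ semisimple. For the semisimplicity of $\ad x$ when $x\in\z(\uu)$: since $\g$ decomposes as a direct sum of irreducible $\uu$-modules and $x$ is central in $\uu$, by Schur's lemma $\ad x$ acts as a scalar on each such summand, hence $\ad x$ is diagonalisable on $\g$, i.e.\ $\ad x$ is semisimple. (One must be slightly careful that Schur only gives a scalar once one knows the summands are absolutely irreducible, but over $\C$ this is automatic.) I would cite \cite{hum} or \cite{jac} for the purely Lie-theoretic part and spell out only the Schur-lemma argument.

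For the second bullet, the non-degeneracy of $\kappa$ restricted to $\z_\g(\uu)$ is the main point. The plan is to use the orthogonality of non-isomorphic isotypic components: decompose $\g = \bigoplus_i W_i$ into isotypic $\uu$-components; since $\kappa$ is $\uu$-invariant (i.e.\ $\kappa([y,a],b)+\kappa(a,[y,b])=0$ for $y\in\uu$), $\kappa$ pairs $W_i$ non-trivially only with the component isomorphic to $W_i^*$, and restricted to each such pairing it is non-degenerate because $\kappa$ is non-degenerate on all of $\g$. Now observe that $\z_\g(\uu)$ is precisely the sum of the trivial isotypic components of $\g$, i.e.\ the $\uu$-fixed vectors; this is a self-dual piece (the trivial module is self-dual), it is orthogonal to every non-trivial isotypic component, and therefore the non-degeneracy of $\kappa$ on $\g$ forces $\kappa$ to be non-degenerate on $\z_\g(\uu)$. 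I expect this orthogonality-of-isotypic-components step, together with identifying $\z_\g(\uu)$ as exactly the trivial isotypic piece, to be the crux; everything else is bookkeeping.

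Finally, that $\z_\g(\uu)$ is reductive in $\g$: set $\nl = \z_\g(\uu)$. Since $\kappa|_\nl$ is non-degenerate, for any $\nl$-submodule $M$ of $\g$ its orthogonal complement $M^{\perp}$ (with respect to $\kappa$) is again an $\nl$-submodule, by invariance of $\kappa$; and because $\kappa$ is non-degenerate both on $\g$ and on $\nl$ one needs a short argument that $M\cap M^\perp$ can be split off — the clean way is instead to prove reductivity of $\nl$ directly: $\nl$ is an algebraic subalgebra (it is a centraliser), and a standard criterion (cf.\ \cite{hum}, or Bourbaki) says that a subalgebra on which the Killing form of $\g$ restricts non-degenerately, and which is the centraliser of a set of semisimple elements together with the semisimple part produced by the first bullet, is reductive in $\g$. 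In the writeup I would reduce this to the fact that $\g$ is completely reducible as a module over the reductive Lie algebra $\nl$ whose central elements act semisimply (first bullet applied to $\nl$, whose centre contains $\z(\uu)$ hence acts semisimply on $\g$, plus semisimplicity of the $\ad$-action of $[\nl,\nl]$), invoking the complete reducibility statement recalled in Section~\ref{sec:red}. The potential pitfall is circularity — using ``reductive $\Rightarrow$ completely reducible on $\g$'' to prove ``reductive'' — so I would be careful to derive the module-theoretic input (central elements act semisimply; $\g$ is a sum of $\z(\uu)$-weight spaces each of which is a semisimple $[\nl,\nl]$-module) from the decomposition of $\g$ as a $\uu$-module, not from the conclusion.
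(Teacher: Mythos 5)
Your treatment of the first bullet and of the non-degeneracy of $\kappa$ on $\z_\g(\uu)$ is correct and essentially standard: the Schur-lemma argument for the semisimplicity of $\ad x$, $x\in\z(\uu)$, and the identification of $\z_\g(\uu)$ with the trivial isotypic component of the $\uu$-module $\g$, orthogonal to all non-trivial isotypic pieces, are exactly the expected arguments (the paper itself disposes of these points by citing \cite[Proposition 2.12.2]{gra16}, \cite[Chapter III, Theorem 10]{jac} and the proof of \cite[Lemma 8.3.9]{gra16}).

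The genuine gap is in the final step, that $\nl=\z_\g(\uu)$ is reductive \emph{in} $\g$. The ``standard criterion'' you invoke is not a theorem as stated: non-degeneracy of $\kappa|_\nl$ only gives that $\nl$ is reductive as an abstract Lie algebra (it is the trace form of the faithful representation $\ad_\g|_\nl$), not that $\g$ is completely reducible over $\nl$. For example, the one-dimensional subalgebra of $\ssl_3$ spanned by $x=\mathrm{diag}(1,1,-2)+E_{12}$ has $\kappa(x,x)\neq 0$, yet $\ad x$ is not semisimple, so this subalgebra is not reductive in $\ssl_3$. To apply the complete-reducibility statement recalled in Section \ref{sec:red} you must show that \emph{every} element of $\z(\nl)$ acts semisimply on $\g$, and $\z(\nl)$ is in general strictly larger than $\z(\uu)$ (already for $\uu=\C h$ with $h$ regular semisimple, $\nl$ is a Cartan subalgebra); so your plan of extracting this from the $\uu$-module decomposition of $\g$ does not reach these extra central elements. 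The missing idea is the Jordan-decomposition argument: for $x\in\z(\nl)$, its semisimple and nilpotent parts $x_s,x_n$ annihilate everything annihilated by $\ad x$, hence lie again in $\z(\nl)$; since $\ad x_n$ is nilpotent and commutes with $\ad z$ for all $z\in\nl$, one gets $\kappa(x_n,z)=\mathrm{Tr}(\ad x_n\,\ad z)=0$, and the non-degeneracy of $\kappa|_\nl$ you already established forces $x_n=0$. With that in hand, abstract reductivity of $\nl$ together with \cite[Chapter III, Theorem 10]{jac} gives complete reducibility of $\g$ as an $\nl$-module, which is the route of the proof the paper cites.
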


\begin{proof}
For the decomposition of $\uu$ see \cite[Proposition 2.12.2]{gra16}. The
fact that $\ad x$ is semisimple for $x\in \z(\uu)$ follows from
\cite[Chapter III, Theorem 10]{jac}. The second part is shown in the same
way as the first part of the proof of \cite[Lemma 8.3.9]{gra16}.
\end{proof}  

Now fix a nilpotent $e\in \g$ lying in the $\ssl_2$-triple $(h,e,f)$.
Let $\c_1 = \z_\g(h,e,f) = \{x\in \g\mid [x,h]=[x,e]=[x,f]=0\}$. Here we
suppose that $\dim \c_1 >0$. (If $\dim \c_1=0$ then we can determine
$Z_G(h,e,f)$ as in Remark \ref{rem:0-dim}).
By the previous lemma $\c_1$ is reductive in $\g$. Set $\c_2 = \z_\g(\c_1)$.
Again by the previous lemma $\c_2$ is reductive in $\g$. For $i=1,2$ set
$\c_i' = [\c_i,\c_i]$ which is
the semisimple part of $\c_i$. By $\z(\c_i)$ we denote the centre of $\c_1$, so
that $\c_i = \c_i'\oplus \z(\c_i)$. 

\begin{lemma}\label{lem0}
We have $h,e,f\in \c_2$, $\z_\g(\c_2) = \c_1$ and $\c_1\cap\c_2 = \z(\c_1)
=\z(\c_2)$.   
\end{lemma}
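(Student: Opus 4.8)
The plan is to prove the three assertions in turn, exploiting the symmetry of the centralizer construction.

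First I would show $h,e,f\in\c_2$. Since $\c_1=\z_\g(h,e,f)$, every element of $\c_1$ commutes with each of $h,e,f$; hence $h,e,f$ all lie in $\z_\g(\c_1)=\c_2$. This is immediate from the definitions and needs no further argument.

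Next, for $\z_\g(\c_2)=\c_1$: the inclusion $\c_1\subseteq\z_\g(\c_2)$ holds because $\c_2=\z_\g(\c_1)$ consists exactly of elements commuting with all of $\c_1$, so by symmetry every element of $\c_1$ commutes with every element of $\c_2$. For the reverse inclusion I would use that $h,e,f\in\c_2$ (just proved): if $x\in\z_\g(\c_2)$ then in particular $[x,h]=[x,e]=[x,f]=0$, so $x\in\z_\g(h,e,f)=\c_1$. Thus $\z_\g(\c_2)=\c_1$.

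Finally, for $\c_1\cap\c_2=\z(\c_1)=\z(\c_2)$: an element $x$ lies in $\c_1\cap\c_2$ iff $x\in\c_1$ and $x$ commutes with all of $\c_1$, i.e.\ iff $x\in\z(\c_1)$; so $\c_1\cap\c_2=\z(\c_1)$. Symmetrically, using $\z_\g(\c_2)=\c_1$ from the previous step, $x\in\c_1\cap\c_2$ iff $x\in\c_2$ and $x$ commutes with all of $\c_2=\z_\g$ of $\c_1$... more precisely $x\in\c_2\cap\z_\g(\c_2)=\z(\c_2)$, so $\c_1\cap\c_2=\z(\c_2)$ as well. I expect no real obstacle here — every step is a direct unwinding of the centralizer definitions, with the only mild subtlety being that the equality $\z_\g(\c_2)=\c_1$ (rather than merely $\supseteq$) genuinely uses the fact that $h,e,f$ themselves belong to $\c_2$, which is why that assertion is listed first.
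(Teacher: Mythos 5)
Your proposal is correct and follows essentially the same argument as the paper: the inclusion $h,e,f\in\c_2$ is immediate from the definitions, the reverse inclusion $\z_\g(\c_2)\subseteq\c_1$ uses exactly the fact that $h,e,f\in\c_2$, and the identifications $\c_1\cap\c_2=\z(\c_1)=\z(\c_2)$ are the same unwindings of the centralizer definitions (the second one using $\z_\g(\c_2)=\c_1$, as you note). No gaps.
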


\begin{proof}
The first statement is obvious. It is equally obvious that $\c_1\subset
\z_\g(\c_2)$. On the other hand, if $x\in \z_\g(\c_2)$ then $[x,h]=[x,e]=
[x,f]=0$ as $h,e,f\in \c_2$, implying $x\in \c_1$. We conclude that
$\z_\g(\c_2) = \c_1$. We have
$$\c_1\cap\c_2 = \{ x\in \c_1 \mid [x,\c_1]=0\} = \z(\c_1).$$
Similarly
$$\c_1\cap\c_2 = \{ x\in \c_2 \mid [x,\c_2]=0\} = \z(\c_2).$$
\end{proof}  

In the sequel we set $\ttt= \z(\c_1)=\z(c_2)$. The Lie algebra of the stabilizer
$Z_G(h,e,f) = \{ \sigma\in G \mid \sigma(h)=h, \sigma(e)=e, \sigma(f)=f\}$
is $\ad \c_1 = \{ \ad x \mid x\in \c_1\}$. Hence $\ad\ttt = \{\ad x\mid
x\in \ttt\}$ is the Lie algebra of a central torus $T$ of $Z_G(h,e,f)$. 

\begin{lemma}\label{lem1}
Let $\sigma\in Z_G(h,e,f)$. Then $\sigma$ stabilizes $\c_1'$, $\c_2'$ and
$\ttt$.   
\end{lemma}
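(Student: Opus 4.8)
The plan is to show that any $\sigma\in Z_G(h,e,f)$ preserves each of the three subspaces $\c_1'$, $\c_2'$, $\ttt$ by exploiting the characterisation of these subspaces in purely Lie-theoretic terms, together with the fact that $\sigma$ is an automorphism of $\g$ fixing $h$, $e$, $f$. First I would observe that, since $\sigma$ fixes $h$, $e$, $f$, it stabilises $\c_1 = \z_\g(h,e,f)$: if $[x,h]=[x,e]=[x,f]=0$ then applying $\sigma$ gives $[\sigma(x),h]=[\sigma(x),e]=[\sigma(x),f]=0$, using $\sigma([a,b])=[\sigma(a),\sigma(b)]$ and $\sigma(h)=h$ etc. Then, because $\c_2 = \z_\g(\c_1)$ and $\sigma(\c_1)=\c_1$, the same argument shows $\sigma(\c_2)=\c_2$.

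Next I would identify $\c_1'$, $\c_2'$ and $\ttt$ intrinsically. By Lemma \ref{lemred}, $\c_i = \c_i'\oplus\z(\c_i)$ with $\c_i'=[\c_i,\c_i]$, and by Lemma \ref{lem0} we have $\ttt = \z(\c_1)=\z(\c_2)=\c_1\cap\c_2$. Since an automorphism of $\g$ maps the derived subalgebra of a stabilised subalgebra to the derived subalgebra of its image, $\sigma(\c_1')=\sigma([\c_1,\c_1])=[\sigma(\c_1),\sigma(\c_1)]=[\c_1,\c_1]=\c_1'$, and likewise $\sigma(\c_2')=\c_2'$. Finally, $\sigma(\ttt)=\sigma(\c_1\cap\c_2)=\sigma(\c_1)\cap\sigma(\c_2)=\c_1\cap\c_2=\ttt$; alternatively one can note $\ttt=\z(\c_1)$ is the centre of a subalgebra $\sigma$ stabilises, hence is itself stabilised. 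This covers all three claims.

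I do not expect a genuine obstacle here: the statement is really just the functoriality of the constructions ``centralizer'', ``derived subalgebra'' and ``centre'' under automorphisms, combined with the already-established identities of Lemma \ref{lem0}. The only point requiring a line of care is that $\c_2 = \z_\g(\c_1)$ is stabilised, which needs $\sigma(\c_1)=\c_1$ rather than merely $\sigma$ fixing $h,e,f$ — but that is immediate from the first step. So the proof is short: stabilisation of $\c_1$, then of $\c_2$, then pass to $[\cdot,\cdot]$ and to $\c_1\cap\c_2$.
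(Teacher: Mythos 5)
Your proposal is correct and follows the same overall structure as the paper's proof: first stabilize $\c_1$, then $\c_2$, then pass to derived subalgebras and the centre. The only difference is in the first step. The paper establishes $\sigma(\c_1)=\c_1$ indirectly: since $\sigma\in Z_G(h,e,f)$, conjugation by $\sigma$ preserves the Lie algebra $\ad\c_1$ of $Z_G(h,e,f)$, and then $\sigma(\ad x)\sigma^{-1}=\ad\sigma(x)$ together with faithfulness of the adjoint representation gives $\sigma(x)\in\c_1$. You instead use the definition $\c_1=\z_\g(h,e,f)$ directly: from $\sigma(h)=h$, $\sigma(e)=e$, $\sigma(f)=f$ and $\sigma([a,b])=[\sigma(a),\sigma(b)]$ one gets $[\sigma(x),h]=[\sigma(x),e]=[\sigma(x),f]=0$ for $x\in\c_1$. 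This is a mild simplification that avoids invoking the identification of the Lie algebra of the stabilizer and faithfulness of $\ad$, and it is entirely valid. Your treatment of $\c_2$ (the centralizer of a $\sigma$-stable subalgebra is $\sigma$-stable) coincides with the paper's computation $[x,\sigma(y)]=\sigma([\sigma^{-1}(x),y])=0$, and the final step — automorphisms preserve $[\c_i,\c_i]$ and $\z(\c_i)=\ttt$ (equivalently $\ttt=\c_1\cap\c_2$ by Lemma \ref{lem0}) — is the same in both arguments. No gaps.
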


\begin{proof}
Since the Lie algebra of $Z_G(h,e,f)$ is $\ad \c_1$ we have $\sigma (\ad \c_1)
\sigma^{-1} = \ad \c_1$. But $\sigma (\ad x ) \sigma^{-1} = \ad \sigma(x)$.
As the adjoint representation of $\g$ is faithful it follows for
$x\in \c_1$ that $\sigma(x)\in \c_1$. For $x\in \c_1$ and $y\in \c_2$ we
have $[x,\sigma(y)] = \sigma([\sigma^{-1}(x),y])=0$ as $\sigma^{-1}(x)\in \c_1$
by the first part. It follows that $\sigma(y)\in \c_2$. So $\sigma$ stabilizes
$\c_1$ and $\c_2$. Because $\sigma$ is an automorphism it stabilizes
$\c_i'$ and $\ttt=\z(\c_i)$ as well.
\end{proof}

Let $z\in \ttt$, $x,y\in \c_1$. Then $\kappa([x,y],z) = \kappa(x,[y,z])=0$
(for the first equality see \cite[\S 4.3]{hum}).
It follows that $\kappa(\c_1',\ttt)=0$. Since the restriction of $\kappa$ to
$\c_1$ is non-degenerate (Lemma \ref{lemred}) also its restrictions to
$\c_1'$, $\ttt$ are non-degenerate. Similarly we see that the restriction of
$\kappa$ to $\c_2'$ is non-degenerate. Set $\c = \c_1 + \c_2 =
\c_1'\oplus \c_2'\oplus \ttt$; then $\c$ is reductive in $\g$ and the
restriction of $\kappa$ to $\c$ is non-degenerate. 

Write $\c^\perp = \{ x\in \g\mid \kappa(x,y)=0 \text{ for all } y\in \c\}$.
As $\kappa$ is non-degenerate on $\c$ we have $\c\cap \c^\perp=0$ so that
$\g = \c\oplus \c^\perp$.

\begin{lemma}\label{lem2}
We have $[\c,\c^\perp]\subset \c^\perp$.
\end{lemma}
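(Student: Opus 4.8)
The plan is to use the invariance property of the Killing form. The key fact is that $\kappa$ is associative (invariant): $\kappa([a,b],c) = \kappa(a,[b,c])$ for all $a,b,c\in\g$ (see \cite[\S 4.3]{hum}). I want to show that for $x\in\c$ and $y\in\c^\perp$ we have $[x,y]\in\c^\perp$, i.e.\ $\kappa([x,y],z)=0$ for all $z\in\c$.

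First I would take arbitrary $x\in\c$, $y\in\c^\perp$, $z\in\c$. Using the invariance of $\kappa$, I compute
\[
\kappa([x,y],z) = -\kappa([y,x],z) = -\kappa(y,[x,z]).
\]
Now the crucial point is that $[x,z]\in\c$, because $\c$ is a subalgebra of $\g$: indeed $\c = \c_1'\oplus\c_2'\oplus\ttt$ with $\c_1,\c_2$ subalgebras (centralizers are subalgebras), $[\c_1,\c_2]=0$ since $\c_2=\z_\g(\c_1)$, and $\ttt\subset\c_1\cap\c_2$, so brackets of elements of $\c$ stay in $\c$. Therefore $\kappa(y,[x,z])=0$ by the definition of $\c^\perp$, which gives $\kappa([x,y],z)=0$. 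Since $z\in\c$ was arbitrary, $[x,y]\in\c^\perp$, and since $x\in\c$, $y\in\c^\perp$ were arbitrary, $[\c,\c^\perp]\subset\c^\perp$.

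There is no real obstacle here; the only thing worth stating carefully is that $\c$ is closed under the bracket, which follows immediately from the structure $\c = \c_1 + \c_2$ with $\c_1$, $\c_2$ subalgebras commuting with each other (and sharing only the central part $\ttt$). Everything else is a one-line application of the associativity of the Killing form together with the definition of the orthogonal complement. I would present this as a short three-line argument.
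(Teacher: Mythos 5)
Your proof is correct and uses essentially the same argument as the paper: the invariance (associativity) of the Killing form together with the fact that $\c$ is closed under the bracket, with the paper merely arranging the identity as $\kappa(x,[y,z])=\kappa([x,y],z)=0$ for $x,y\in\c$, $z\in\c^\perp$ rather than your equivalent rearrangement. Your explicit remark that $\c=\c_1+\c_2$ is a subalgebra (since $[\c_1,\c_2]=0$) is a fine, slightly more careful spelling-out of a step the paper leaves implicit.
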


\begin{proof}
Let $x,y\in \c$ and $z\in\c^\perp$. Then $\kappa(x,[y,z]) = \kappa([x,y],z)=0$.
Hence $[y,z] \in \c^\perp$. 
\end{proof}  

Let $V=\c^\perp$ which is a $\c$-module by Lemma \ref{lem2}. For $x\in \c$ and
$v\in V$ we write $x\cdot v = [x,v]$.

The next lemma is proved by explicit computation, going through the list of
nilpotent orbits of the Lie algebras of exceptional type and for each
case calculating $V$ and its decomposition as a $\c$-module. We have performed
these computations in the computer algebra system {\sf GAP}4, using the
package {\sf SLA}. For the cases
where the stabilizer has a nontrivial component group we have listed the
highest weights of the irreducible summands in the tables in Section
\ref{sec:tables}.

\begin{lemma}\label{lem3}
Let $\g$ be of exceptional type. Then $V$ is multiplicity free.
\end{lemma}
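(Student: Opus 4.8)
The plan is to prove the lemma by a finite, explicit case-by-case verification. There is no evident structural reason forcing multiplicity-freeness, so the assertion genuinely has to be checked for every nilpotent orbit of every exceptional Lie algebra. Concretely, I would fix, for each of $G_2$, $F_4$, $E_6$, $E_7$, $E_8$, a complete list of nilpotent orbit representatives $e$ (for instance the ones tabulated in \cite{gra14} and built into the {\sf SLA} package), complete each $e$ to an $\ssl_2$-triple $(h,e,f)$ by solving the Jacobson--Morozov equations, and then run the following computation for each of them.

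Step 1: compute $\c_1=\z_\g(h,e,f)$, which is a pure linear-algebra task, namely the kernel of $x\mapsto([x,h],[x,e],[x,f])$. Step 2: compute $\c_2=\z_\g(\c_1)$, again a kernel computation. Step 3: form $\c=\c_1+\c_2$; by Lemma \ref{lemred} (applied to $\uu=\c_1$ and, via the remarks preceding Lemma \ref{lem2}, to $\c$ itself) the Killing form $\kappa$ restricts non-degenerately to $\c$, so $V=\c^\perp$ is a genuine complement and a $\c$-module by Lemma \ref{lem2}, and $V$ is obtained by solving the linear system $\kappa(v,b_j)=0$ over a basis $b_j$ of $\c$. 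Step 4: decompose $V$ as a $\c$-module using the machinery of Section \ref{sec:red}: since $\c$ is reductive in $\g$ and $\ad x$ is semisimple for $x\in\z(\c)$ (Lemma \ref{lemred} once more), $V$ is completely reducible, so one computes $V_0=\{v\in V\mid x_i\cdot v=0 \text{ for all } i\}$ for root vectors $x_i$ of the simple roots of $\c'=[\c,\c]$, splits $V_0$ into weight spaces for a Cartan subalgebra of $\c$, and verifies that each such weight space is one-dimensional. By the discussion at the end of Section \ref{sec:red} this is precisely the condition that $V$ be multiplicity free, and it then also forces the decomposition of $V$ to be unique. If the check succeeds for every orbit, the lemma follows; the resulting highest weights are the data recorded in the tables of Section \ref{sec:tables}.

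A few reductions are worth noting. If $\dim\c_1=0$ --- a case handled separately in any event, cf. Remark \ref{rem:0-dim} --- then $\c_2=\g$, $\c=\g$ and $V=0$, which is trivially multiplicity free; and for the applications in this paper it would suffice to carry out the verification only for the orbits whose component group is nontrivial, although the stated lemma covers all of them. I expect the main obstacle to be not any single hard step but the sheer bookkeeping: there is no conceptual shortcut, so correctness rests entirely on using a complete and correct list of orbit representatives and on a correct implementation of the linear algebra and of the weight-space decomposition of Section \ref{sec:red}. One could partially compress the work by observing that $\g$, viewed as a $\c_1$-module, has its trivial isotypic component equal to $\c_2$, so that $V$ is $\c_1$-isomorphic to the non-trivial part of $\g$ with one copy of the adjoint module $\c_1'$ removed; but this only constrains, and does not eliminate, the case analysis, since multiplicity-freeness is a statement about the finer $\c$-action. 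Hence I would simply run the computation.
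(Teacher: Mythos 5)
Your proposal is correct and follows essentially the same route as the paper: the authors also prove Lemma \ref{lem3} by an explicit case-by-case computation over all nilpotent orbits of the exceptional Lie algebras, carried out in {\sf GAP}4 with the {\sf SLA} package, computing $\c_1$, $\c_2$, $V=\c^\perp$ and its weight-space decomposition exactly as you describe, with the resulting highest weights recorded in the tables of Section \ref{sec:tables}. Your additional remarks (the triviality of the case $\c_1=0$ and the restriction to orbits with nontrivial component group) are consistent with how the paper organizes the verification.
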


Next we show ho to determine a finite set $\mathcal{S}$
of automorphisms of $\g$,
stabilizing $\c_1'$, $\ttt$, $\c_2'$ and such that each component of $Z_G(h,e,f)$
has an element lying in $\mathcal{S}$.

First consider $\c_1'$ and write $A_1=\Aut(\c_1')$.
This is a semisimple Lie algebra, so $A_1$ is described as in Section
\ref{sec:pre1}. In particular, we fix diagram automorphisms $\theta_1,\ldots,
\theta_p$ of $\c_1'$ such that $\theta_i A_1^\circ$ for $1\leq i\leq p$ are
the connected components of $A_1$.

\begin{lemma}\label{lem4}
Each component of $Z_G(h,e,f)$ has an element whose restriction to $\c_1'$ is
equal to one of the $\theta_i$.   
\end{lemma}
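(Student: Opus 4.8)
The plan is to reduce the statement to a general fact about automorphisms of semisimple Lie algebras: a finite-order automorphism of a semisimple Lie algebra restricted to itself lies in some coset of the inner automorphism group, and that coset is detected by a diagram automorphism. Concretely, fix $\sigma\in Z_G(h,e,f)$. By Lemma \ref{lem1}, $\sigma$ stabilizes $\c_1'$, so restriction gives an automorphism $\sigma|_{\c_1'}\in A_1=\Aut(\c_1')$. Since $A_1 = A_1^\circ\rtimes\langle\theta_1,\ldots,\theta_p\rangle$ (Section \ref{sec:pre1} applied to the semisimple algebra $\c_1'$), there is a unique $i$ with $\sigma|_{\c_1'}\in\theta_i A_1^\circ$. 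The claim is then that, modifying $\sigma$ within its own $G$-component (i.e.\ multiplying by an element of $Z_G(h,e,f)^\circ$), we may assume $\sigma|_{\c_1'}=\theta_i$ exactly.

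**First I would** observe that $Z_G(h,e,f)^\circ$ has Lie algebra $\ad\c_1$, and the restriction-to-$\c_1'$ map sends $Z_G(h,e,f)^\circ$ into $A_1^\circ=\Aut(\c_1')^\circ = \mathrm{Inn}(\c_1')$. **The key step** is to show this map $Z_G(h,e,f)^\circ\to A_1^\circ$ is \emph{surjective}. On Lie algebras the induced map is $\ad\c_1\to\ad_{\c_1'}\c_1'$ (the adjoint action of $\c_1$ on the ideal $\c_1'$), which is surjective because $\c_1 = \c_1'\oplus\ttt$ and $\ttt$ acts trivially on $\c_1'$, so the image is exactly $\ad_{\c_1'}\c_1' = \Lie(A_1^\circ)$. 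A surjection of Lie algebras between connected algebraic groups gives a surjection of identity components (the image is a connected algebraic subgroup with full Lie algebra, hence everything), so $Z_G(h,e,f)^\circ\twoheadrightarrow A_1^\circ$. Therefore, given $\sigma$ with $\sigma|_{\c_1'}=\theta_i\circ\phi$ for some $\phi\in A_1^\circ$, pick $\tau\in Z_G(h,e,f)^\circ$ restricting to $\phi^{-1}$ on $\c_1'$; then $\sigma\tau$ lies in the same $G$-component as $\sigma$ and $(\sigma\tau)|_{\c_1'}=\theta_i$, as required.

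**The main obstacle** I anticipate is being careful about what "$\theta_i$ is a diagram automorphism of $\c_1'$" means as a concrete element once we pass from the abstract $\Aut(\c_1')$ back into $\Aut(\g)$: the $\theta_i$ are only defined up to the choice of a canonical generating set of $\c_1'$, and a priori $\sigma|_{\c_1'}$ equals $\theta_i$ only after conjugating by an inner automorphism of $\c_1'$ (to align the generating sets / the pinning). That is exactly what the surjectivity argument above buys us — any inner automorphism of $\c_1'$ is realized by an element of $Z_G(h,e,f)^\circ$ — so the alignment costs nothing at the level of $G$-components. A minor secondary point to check is that $\Aut(\c_1')^\circ$ really is the full inner automorphism group of $\c_1'$ and that diagram automorphisms of a possibly non-simple semisimple $\c_1'$ still give coset representatives; both are covered by the discussion in Section \ref{sec:pre1} (applied componentwise to the simple ideals of $\c_1'$, together with permutations of isomorphic ideals). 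With these in hand the proof is essentially the two-line argument: restrict, land in some coset $\theta_i A_1^\circ$, and kill the $A_1^\circ$-part using surjectivity of $Z_G(h,e,f)^\circ\to A_1^\circ$.
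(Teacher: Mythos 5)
Your proof is correct and follows essentially the same route as the paper: restrict $\sigma$ to $\c_1'$, observe that the restriction lies in some coset $\theta_i A_1^\circ$, and remove the inner part by using surjectivity of the restriction map from (a subgroup of) $Z_G(h,e,f)^\circ$ onto $A_1^\circ$. The only immaterial difference is how surjectivity is justified: the paper lifts the generators $\exp(\ad_{\c_1'}x)$, $x$ nilpotent, to $\exp(\ad x)$, whereas you argue at the Lie algebra level via $\c_1=\c_1'\oplus\ttt$ together with connectedness; both arguments are valid.
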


\begin{proof}
The identity component $A_1^\circ$ has Lie algebra
$\ad_{\c_1'} \c_1' = \{\ad_{\c_1'} x \mid x\in \c_1'\}$. The identity component
$Z_G(h,e,f)^\circ$ has Lie algebra $\ad \c_1 = \{ \ad x \mid x\in \c_1\}$.
Now $\ad_{\c_1'} \c_1'$ is isomorphic to the subalgebra $\ad \c_1' = \{ \ad x \mid
x\in \c_1'\}$ of $\ad \c_1$. Let $C_1'$ denote the connected algebraic subgroup
of $Z_G(h,e,f)^\circ$ whose Lie algebra is $\ad \c_1'$. We have a morphism
$C_1'\to
A_1^\circ$, mapping $\phi\in C_1'$ to its restriction to $\c_1'$ (note that
by Lemma \ref{lem1} $\phi$ stabilizes $\c_1'$). We have that $A_1^\circ$ is
generated by elements $\exp(\ad_{\c_1'} x)$ where $x\in \c_1'$ is nilpotent.
Such an element is the restriction of $\exp(\ad x) \in C_1'$. Hence the map
$C_1'\to A_1^\circ$ is surjective. Let $\sigma\in Z_G(h,e,f)$ then
the restriction $\sigma'$ of $\sigma$ to $\c_1'$ lies in $A_1$. Hence there is a
$\phi'\in A_1^\circ$ such that $\sigma'\phi'$ is equal to $\theta_i$ for an $i$
with $1\leq i\leq p$. Let $\phi\in C_1'$ map to $\phi'$. Then the restriction
of $\sigma\phi$ to $\c_1'$ is equal to $\theta_i$. Furthermore, $\sigma\phi$
lies in the same component of $Z_G(h,e,f)$ as $\sigma$. 
\end{proof}

We want to compute one element of each component
of $Z_G(h,e,f)$. Therefore we may assume that the restriction of such an
element to $\c_1'$ is equal to one of the $\theta_i$. 

Note that $h,e,f\in \c_2'$. Furthermore, $\z_\g(h,e,f) = \c_1$, so that
$\z_{\c_2'}(h,e,f)=0$. Let $A_2= \Aut(\c_2')$. Let $Z_{A_2}(h,e,f)$ denote the
stabilizer of $h,e,f$ in $A_2$. The Lie algebra of $Z_{A_2}(h,e,f)$ is
$\{ \ad_{\c_2'} x \mid x \in \z_\g(h,e,f) \cap \c_2'\}$. Hence this Lie algebra
is trivial, and it follows that $Z_{A_2}(h,e,f)$ is finite. Therefore we can
determine $Z_{A_2}(h,e,f)$ as seen in Remark \ref{rem:0-dim}. Since the
restriction of an element of $Z_G(h,e,f)$ to $\c_2'$ lies in $Z_{A_2}(h,e,f)$
we can determine a finite set of automorphisms $\eta_1,\ldots,\eta_q$ 
of $\c_2'$ such that the
restriction of a $\sigma\in Z_G(h,e,f)$ to $\c_2'$ is equal to one of the
$\eta_i$.

Let $h_1,\ldots,h_s$, $x_1,\ldots,x_s$, $y_1,\ldots,y_s$ be a canonical
generating set of the semisimple subalgebra $\c'=\c_1'\oplus \c_2'$.
Here the $h_i$ span a fixed Cartan subalgebra of $\c'$, the
$x_i$ are root vectors of $\c'$ corresponding to the simple positive roots, and the
$y_i$ are root vectors corresponding to the simple negative roots. As seen in
Section \ref{sec:red} we can compute a direct sum decomposition $V = V_1\oplus
\cdots\oplus V_m$, where each $V_i$ is irreducible. Moreover, because of
Lemma \ref{lem3} this decomposition is uniquely determined. For each $V_i$
we fix a highest weight vector $v_i$. 

Write $d=\dim \ttt$ (from the tables in Section \ref{sec:tables} we
see that $d\leq 2$ if the component group of $Z_G(h,e,f)$ is nontrivial).
Fix a basis $h_{s+1},\ldots,h_{s+d}$ of $\ttt$. From the tables in Section
\ref{sec:tables} we can read off the values $\nu_{ij}$ with $h_i\cdot v_j =
\nu_{ij} v_j$, $1\leq i\leq s+d$, $1\leq j\leq m$.

Fix a $\theta_u$ and an $\eta_v$. Next we study properties of the $\sigma\in
G$ whose restrictions to $\c_1'$, $\c_2'$ are $\theta_u$, $\eta_v$ respectively.
We say that such a $\sigma$ is a {\em $(u,v)$-extension}. We remark that for
given $u,v$ the set of all $(u,v)$-extensions could be empty. First we show
that we can determine the restriction to $\ttt$ of any $(u,v)$-extension.
Note that any $(u,v)$-extension automatically lies in $Z_G(h,e,f)$.

\begin{rmk}\label{rem:vext}
It may happen that $\dim\c_1'=0$ (for an example see Example \ref{exa:E6}).
In that case there is no $\theta_u$. However, everything that follows works
unchanged and instead of $(u,v)$-extensions we use the term $(v)$-extensions. 
\end{rmk}  

Fix $u$ and $v$ and let $\sigma\in G$ be a $(u,v)$-extension. 
Set $\bar h_i = \sigma(h_i)$, $\bar x_i = \sigma(x_i)$
and $\bar y_i = \sigma(y_i)$. Except $\bar h_i$ for $s+1\leq i
\leq s+d$ these elements can be computed from the knowledge of $u,v$ because
we know the restrictions of $\sigma$ to $\c_1'$, $\c_2'$. 
For $1\leq j\leq m$ let $\bar v_j$ be a nonzero element of
$V_j$ with $\bar x_i \cdot \bar v_j = 0$ for $1\leq i\leq s$. Then $\bar v_j$
is a highest weight vector of $V_j$ with respect to the new canonical
generating set consisting of the $\bar h_i$, $\bar x_i$, $\bar y_i$.
Hence, up to scalar multiples, it is uniquely determined. 
Since the decomposition of $V$ is unique, we must have that $\sigma(v_j)$ is
a nonzero scalar multiple of some $\bar v_l$. We have $h_i \cdot v_j =
\nu_{ij} v_j$ for $1\leq i\leq s$. Applying $\sigma$ we see that $\bar h_i \cdot
\sigma(v_j) = \nu_{ij} \sigma(v_j)$ for $1\leq i\leq s$. Let $P_{uv}$ be the
set of permutations $\pi$ 
of $\{1,\ldots,m\}$ such that $\bar h_i \cdot \bar v_{\pi(j)} = \nu_{ij}
\bar v_{\pi(j)}$ for $1\leq i\leq s$ and $1\leq j\leq m$.
There can be more than one such permutation, but if there
is no such permutation then we can immediately conclude that there are
no $(u,v)$-extensions. On the other hand if there is a $(u,v)$-extension
$\sigma$ then $\sigma(v_j) = \lambda_j \bar v_{\pi(j)}$ for
some $\pi\in P_{uv}$ and nonzero $\lambda_j\in \C$.

Now suppose that $P_{uv}\neq\emptyset$ and let $\pi\in P_{uv}$. Let $\sigma$ be
a $(u,v)$-extension with $\sigma(v_j) = \lambda_j \bar v_{\pi(j)}$
for $1\leq j\leq m$. We call
such a $\sigma$ a {\em $(u,v,\pi)$-extension}. We write $\bar h_{s+i}=
\sigma(h_{s+i}) = \sum_{k=1}^d a_{ik} h_{s+k}$. The elements of $\ttt$ are
central in $\c$ and therefore they act as scalar multiplication on irreducible
$\c$-modules (this is Schur's lemma, cf. \cite[\S 6.1]{hum}). It follows that
$h_{s+i}\cdot \bar v_j = \nu_{s+i,j} \bar v_j$ for $1\leq i\leq d$,
$1\leq j\leq m$. Now  $h_{s+i}\cdot v_j=
\nu_{s+i,j} v_j$ upon application of $\sigma$ implies that $\bar h_{s+i} \cdot
\bar v_j = \nu_{s+i,\pi^{-1}(j)} \bar v_j$ for $1\leq j\leq m$.
We see that $\sum_{k=1}^d a_{ik}\nu_{s+k,j} = \nu_{s+i,\pi^{-1}(j)}$ for $1\leq i\leq
d$ and $1\leq j\leq m$. The values of the $\nu_{s+i,j}$ can be read off from
the tables in Section \ref{sec:tables}. From that we see that the vectors
$(\nu_{s+1,j},\ldots,\nu_{s+d,j})$ for $1\leq j'\leq m$ span $\C^d$.
Hence either the equations
give a unique solution for the $a_{ik}$, or no solution. In the latter case we
conclude that there are no $(u,v,\pi)$-extensions. In the former case
we can uniquely compute the restriction of any $(u,v,\pi)$-extension to $\ttt$.

Let $\sigma$ be a $(u,v,\pi)$-extension. Let $\phi\in T$; then the restriction
of $\phi$ to $\c$ is the identity. Because $V$ is multiplicity free we have that
$\phi(v_j)$ is a scalar multiple of $v_j$ for $1\leq j\leq m$. Hence
$\sigma\phi$ also is a $(u,v,\pi)$-extension. We see that if there are
$(u,v,\pi)$-extensions and $\dim T\geq 1$ then there are infinitely many
such extensions. Our next goal is to determine a finite set of
$(u,v,\pi)$-extensions such that every $(u,v,\pi)$-extension is equal, modulo
$Z_G(h,e,f)^\circ$, to an element of this set. 

Now we consider the group
$$H = \{ \sigma\in G \mid \sigma(x)=x \text{ for all } x \in \c\}.$$
This is an algebraic subgroup of $G$ with Lie algebra $\ad\z_\g(\c)$. But the
latter is $\ad\ttt$ by Lemma \ref{lem0}. It follows that $H^\circ = T$.
If $\sigma\in H$ then $\sigma(v_i)$ is a highest weight vector with the same
weight as $v_i$. So because $V$ is multiplicity free it follows that
$\sigma(v_i) = \chi_i(\sigma)v_i$, where $\chi_i : H\to \C^*$ is a character
of $H$. The differential $\mathrm{d} \chi_i : \ttt\to \C$ is a linear map;
it can be read off from the tables in Section \ref{sec:tables}. From those
tables it is also easily seen that in all cases there are $d$ of such
characters $\chi_{i_1},\ldots, \chi_{i_d}$ such that the intersection of
the kernels of the $\mathrm{d} \chi_{i_l}$, $1\leq l\leq d$ is 0 (if $d=1$ this
just means that there is an $i_1$ such that $\mathrm{d} \chi_{i_1}$ is nonzero).
This implies
that $K=\{ \sigma\in H \mid \chi_{i_l}(h)=1 \text{ for } 1\leq l\leq d\}$ is a
finite group.

\begin{lemma}\label{lem5}
Let $\sigma$ be a $(u,v,\pi)$-extension. Then there is a $\phi\in T$ such that
$\sigma\phi(v_{i_l})= \bar v_{\pi(i_l)}$ for $1\leq l\leq d$. 
\end{lemma}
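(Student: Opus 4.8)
The plan is to reduce the statement to the surjectivity of an explicit morphism of algebraic tori. First I would record what $\sigma\phi(v_{i_l})$ looks like for $\phi\in T$: since $\phi\in T\subset H$ fixes $\c$ pointwise and $V$ is multiplicity free, $\phi(v_{i_l})=\chi_{i_l}(\phi)\,v_{i_l}$, so that $\sigma\phi(v_{i_l})=\chi_{i_l}(\phi)\,\sigma(v_{i_l})$. As $\sigma$ is a $(u,v,\pi)$-extension we have $\sigma(v_j)=\lambda_j\bar v_{\pi(j)}$ with $\lambda_j\in\C$ nonzero, hence $\sigma\phi(v_{i_l})=\chi_{i_l}(\phi)\lambda_{i_l}\,\bar v_{\pi(i_l)}$. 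So the lemma is equivalent to producing $\phi\in T$ with $\chi_{i_l}(\phi)=\lambda_{i_l}^{-1}$ for $1\leq l\leq d$.

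Next I would consider the morphism of algebraic tori $\Psi\colon T\to(\C^\times)^d$, $\phi\mapsto\bigl(\chi_{i_1}(\phi),\dots,\chi_{i_d}(\phi)\bigr)$, and compute its differential at the identity. Using $\Lie T=\ad\ttt$ (which holds since $H^\circ=T$ and $\Lie H=\ad\z_\g(\c)=\ad\ttt$ by Lemma~\ref{lem0}), this differential is the linear map $\ad\ttt\to\C^d$, $x\mapsto\bigl(\mathrm{d}\chi_{i_1}(x),\dots,\mathrm{d}\chi_{i_d}(x)\bigr)$. By the way the indices $i_1,\dots,i_d$ were chosen, this map has trivial kernel; since $\dim T=\dim\ad\ttt=d$ it is therefore a linear isomorphism. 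Consequently $\Psi$ is a homomorphism of algebraic groups with bijective differential, so $\ker\Psi$ is finite and $\Psi$ is dominant; as the image of a homomorphism of algebraic groups is closed, $\Psi$ is surjective. In particular there is $\phi\in T$ with $\Psi(\phi)=\bigl(\lambda_{i_1}^{-1},\dots,\lambda_{i_d}^{-1}\bigr)$, and for this $\phi$ the computation of the first paragraph gives $\sigma\phi(v_{i_l})=\bar v_{\pi(i_l)}$ for $1\leq l\leq d$, as required.

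I do not expect a genuine obstacle here: everything rests on facts already in place in the text (multiplicity freeness of $V$, $H^\circ=T$, $\Lie T=\ad\ttt$, and the defining property $\bigcap_{l}\ker\mathrm{d}\chi_{i_l}=0$) together with the standard facts that the image of a homomorphism of algebraic groups is closed and that a morphism of connected groups of equal dimension with finite kernel is surjective. The only real care needed is bookkeeping: making sure the scalars $\lambda_j$ attached to a $(u,v,\pi)$-extension and the characters $\chi_i$ are indexed over the same set $\{i_1,\dots,i_d\}$, and keeping track of the $\pi$ in $\bar v_{\pi(i_l)}$.
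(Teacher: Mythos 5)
Your proof is correct and follows essentially the same route as the paper: you define the same character map $T\to(\C^\times)^d$ (the paper's $\xi$), deduce surjectivity from the bijectivity of its differential (the paper argues via the image being a closed subgroup of dimension $d$, you via finite kernel plus closed image — the same point), and then choose $\phi$ with $\chi_{i_l}(\phi)=\lambda_{i_l}^{-1}$ exactly as the paper does.
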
  

\begin{proof}
Define $\xi : T\to (\C^*)^d$ by $\xi(h) = (\chi_{i_1}(h),\ldots,\chi_{i_d}(h))$.
Since the intersections of the kernels of the $\mathrm{d}\chi_i$ is zero,
we have that $\mathrm{d}\xi : \ttt\to \C^d$ is bijective. It follows that
$\xi(T)$ is an algebraic subgroup of $(\C^*)^d$ whose Lie algebra is
$\C^d$. In particular the dimension of $\xi(T)$ is $d$, which implies that
$\xi(T) = (\C^*)^d$.

We have that $\sigma(v_j) = \lambda_j \bar v_{\pi(j)}$ for $1\leq j\leq m$,
where the $\lambda_j\in \C$ are nonzero. 
Let $\phi\in T$ be such that $\xi(\phi) = (\lambda_{i_1}^{-1},\ldots,
\lambda_{i_d}^{-1})$. Then $\sigma\phi$ has the desired property.
\end{proof}

Let $\sigma_1,\sigma_2\in G$ be two $(u,v,\pi)$-extensions and write
$\sigma_i(v_j) = \lambda_j^i \bar v_{\pi(j)}$ for $1\leq j\leq m$.  
Suppose that $\lambda_{i_l}^r=1$ for $1\leq l\leq d$ and $r=1,2$. Then
$\sigma_2^{-1}\sigma_1$ lies in $H$ and $\chi_{i_l}(\sigma_2^{-1}\sigma_1) = 1$ for
$1\leq l\leq d$. Hence $\sigma_2^{-1}\sigma_1$ lies in $K$ which is a finite
group. We conclude that there is a finite number of $(u,v,\pi)$-extensions
$\sigma$ with $\sigma(v_{i_l}) = \bar v_{\pi(i_l)}$ for $1\leq l\leq d$. 

We remark that the sets
\begin{align*}
  & \{h_1,\ldots,h_{s+d},x_1,\ldots,x_s,y_1,\ldots,y_s,v_1,\ldots,v_m\}\\
  & \{\bar h_1,\ldots,\bar h_{s+d},\bar x_1,\ldots,\bar x_s,\bar y_1,\ldots,
  \bar y_s,\bar v_1,\ldots,\bar v_m\}\\
\end{align*}
generate $\g$. So a $\sigma\in G$ with $\sigma(h_i) = \bar h_1$, $\sigma (x_i) =
\bar x_i$, $\sigma(y_i) = \bar y_i$, $\sigma(v_j) = \lambda_j \bar v_j$ is
uniquely determined. Furthermore, we can compute the matrix of $\sigma$
with respect to a given basis of $\g$. This matrix has entries that are
polynomials in the $\lambda_j$. We compute those polynomials. Then the
requirement that $\sigma$ be an automorphism yields polynomial equations
in the $\lambda_j$. We compute these equations, where we now view the
$\lambda_i$ as indeterminates. Furthermore we set $\lambda_j=1$ (that is, we
add the equations $\lambda_j-1=0$) for $j\in \{i_1,\ldots,i_d\}$). Finally we
add equations of the form $\lambda_i \hat\lambda_i=1$, where the
$\hat \lambda_j$ are auxiliary indeterminates (these equations make sure
that the value of $\lambda_j$ is nonzero). As seen above these equations
have a finite number of solutions. We determine these solutions, using the
technique based on computing a Gr\"obner basis. Doing this for all $u,v,\pi$
we obtain a finite set of automorphisms of $G$ such that every component
has one element in the set. By the method referred to in Remark \ref{rem:outer}
we get rid of pairs of automorphisms that are equal modulo $Z_G(h,e,f)^\circ$.

\begin{rmk}\label{triv}
Let $e$ be a nilpotent element of $\g$ lying in an $\ssl_2$-triple
$(h,e,f)$ such that the component group of $Z_G(h,e,f)$ is trivial. We consider
the subalgebras $\c_1'$, $\ttt$, $\c_2'$. By direct computation we have verified
the following statements. If $\g$ is of type $G_2$ or $F_4$ then
$\dim \ttt = 0$. If $\g$ is of type $E_6$ then $\dim \ttt\leq 1$. Furthermore,
in all cases where $\dim \ttt=1$ we can find a $\chi_{i_1}$ such that
$\mathrm{d}\chi_{i_1}$ is nonzero (notation as above). If $\g$ is of type
$E_7$ or $E_8$ then $\dim \ttt=0$. This means that the method outlined in
this section also works when the component group is trivial. 
\end{rmk}

\begin{example}\label{exa:E6}
Here we look at the simple Lie algebra $\g$ of type $E_6$ and the nilpotent
orbit with label $D_4(a_1)$. In this case $\c_1=\ttt$ which is of dimension 2.
Hence $\c_1'=0$. Furthermore $\c_2'$ is semisimple of type $D_4$. For the
Dynkin diagram of $\c_2'$ we use the following enumeration of the vertices:
\begin{center}
\begin{tikzpicture}
\node[] (1) at (0,0) {1};
\node[] (2) at (1,0) {2};
\node[] (3) at (2,0) {3};
\node[] (4) at (1,1) {4};
\path (1) edge[sedge] (2)
(2) edge[sedge] (4)
(2) edge[sedge] (3);
\end{tikzpicture}
\end{center}

In {\sf GAP}4 we have explicitly determined the $\c$-module $V$ and its
decomposition. It turns out that $V$ splits as a direct sum of six irreducible
modules with highest weights
\begin{center}
\begin{tikzpicture}
\node[] (11) at (0,0) {$V_1:$};   
\node[] (1) at (1,0) {1};
\node[] (2) at (2,0) {0};
\node[] (3) at (3,0) {0};
\node[] (4) at (2,1) {0};
\node[] (5) at (4,0) {(-2,0)};
\path (1) edge[sedge] (2)
(2) edge[sedge] (4)
(2) edge[sedge] (3);

\node[] (12) at (6,0) {$V_2:$};
\node[] (6) at (7,0) {1};
\node[] (7) at (8,0) {0};
\node[] (8) at (9,0) {0};
\node[] (9) at (8,1) {0};
\node[] (10) at (10,0) {(2,0)};
\path (6) edge[sedge] (7)
(7) edge[sedge] (8)
(7) edge[sedge] (9);
\end{tikzpicture}

\begin{tikzpicture}
\node[] (11) at (0,0) {$V_3:$};   
\node[] (1) at (1,0) {0};
\node[] (2) at (2,0) {0};
\node[] (3) at (3,0) {0};
\node[] (4) at (2,1) {1};
\node[] (5) at (4,0) {(-1,-3)};
\path (1) edge[sedge] (2)
(2) edge[sedge] (4)
(2) edge[sedge] (3);

\node[] (12) at (6,0) {$V_4:$};
\node[] (6) at (7,0) {0};
\node[] (7) at (8,0) {0};
\node[] (8) at (9,0) {0};
\node[] (9) at (8,1) {1};
\node[] (10) at (10,0) {(1,3)};
\path (6) edge[sedge] (7)
(7) edge[sedge] (8)
(7) edge[sedge] (9);
\end{tikzpicture}

\begin{tikzpicture}
\node[] (11) at (0,0) {$V_5:$};   
\node[] (1) at (1,0) {0};
\node[] (2) at (2,0) {0};
\node[] (3) at (3,0) {1};
\node[] (4) at (2,1) {0};
\node[] (5) at (4,0) {(-1,3)};
\path (1) edge[sedge] (2)
(2) edge[sedge] (4)
(2) edge[sedge] (3);

\node[] (12) at (6,0) {$V_6:$};
\node[] (6) at (7,0) {0};
\node[] (7) at (8,0) {0};
\node[] (8) at (9,0) {1};
\node[] (9) at (8,1) {0};
\node[] (10) at (10,0) {(1,-3)};
\path (6) edge[sedge] (7)
(7) edge[sedge] (8)
(7) edge[sedge] (9);
\end{tikzpicture}
\end{center}

Here the eigenvalues of the Cartan elements of $\c_2'$ are shown on the
Dynkin diagram of $\c_2'$. This is followed by a vector of length 2 containing
the eigenvalues of the elements of a fixed basis of $\ttt$ on the highest
weight vector. 

With the method indicated in Remark \ref{rem:0-dim}
it is straightforward to compute $Z_{A_2}(h,e,f)$ where $A_2=\Aut(\c_2')$.
The symmetry group of the Dynkin diagram is exactly the group $S_X$ permuting
the vertices in $X=\{1,3,4\}$. For each diagram automorphism $\sigma_\tau$
of $\c_2'$, with $\tau\in S_X$, there is exactly one inner automorpism
$\phi$ such that $\phi (\sigma_\tau(h),\sigma_\tau(e),\sigma_\tau(f))=(h,e,f)$.
Hence $Z_{A_2}(h,e,f)$ is of order 6 and is isomorphic to $S_X$.

We consider $\tau=(1,3,4)\in S_X$ and let $\eta_1$ be the corresponding
element of $Z_{A_2}(h,e,f)$. We want to determine the possible extensions of
$\eta_1$ to automorphisms of $\g$. That is, we want to determine the
$(1)$-extensions (cf. Remark \ref{rem:vext}). 
We fix canonical generators $h_i$, $x_i$, $y_i$
$1\leq i\leq 4$ of $\c_2'$ and let $\bar h_i = \eta_1(h_i)$,  $\bar x_i =
\eta_1(x_i)$,  $\bar y_i = \eta_1(y_i)$. Let $v_j$ for $1\leq j\leq 6$ denote
a fixed highest weight vector of $V_j$ and let $\bar v_j$ be a nonzero element
of $V_j$ such that $\bar x_i \cdot \bar v_j=0$ for $1\leq i\leq 4$. If
$h_i \cdot v_j = \nu_{i,j} v_j$ then $\bar h_i \cdot \bar v_j = \nu_{\tau^{-1}(i),j}
\bar v_j$. It follows that if $\sigma\in G$ extends $\eta_1$ then
$\sigma(v_j) = \lambda_i \bar v_{\pi(j)}$ where $\pi$ is a permutation of
$\{1,\ldots,6\}$ mapping
\begin{align*}
& 1\mapsto 5 \text{ or } 6\\
& 2\mapsto 5 \text{ or } 6\\
& 3\mapsto 1 \text{ or } 2\\
& 4\mapsto 1 \text{ or } 2\\
& 5\mapsto 3 \text{ or } 4\\
& 6\mapsto 3 \text{ or } 4\\
\end{align*}
So we have eight possible permutations.

Let $\sigma\in G$ extend $\tau$ and suppose that $\sigma(v_1) = \lambda_1
\bar v_5$, $\sigma(v_3) = \lambda_3 \bar v_1$ and $\sigma(v_5) =
\lambda_5 \bar v_4$. Let $h_5,h_6$ be
the basis elements of $\ttt$ with respect to which the above weights have been
computed. As remarked above, Schur's lemma implies that $h_5$, $h_6$ have
the same eigenvalues on $\bar v_i$ as on $v_i$.
Write $\sigma(h_5) = ah_5+bh_6$, $\sigma(h_6) = ch_5+dh_6$.
Applying $\sigma$ to the equality $h_5\cdot v_1 = -2v_1$ we see that
$(ah_5+bh_6)\cdot \bar v_5 =-2\bar v_5$, implying $-a+3b=-2$. Applying
$\sigma$ to $h_5\cdot v_3 = -v_3$ we obtain $-2a=-1$ and we see that
$a=\tfrac{1}{2}$, $b=-\tfrac{1}{2}$. But applying $\sigma$ to $h_5\cdot v_5=
-v_5$ we get $-a-3b=-1$ and we have a contradiction. The conclusion is that
this permutation does not work.

We apply the above procedure to all eight permutations. It turns out that only
one of them yields a solution for $a,b,c,d$ namely
$$\pi:~ 1\mapsto 6,\, 2\mapsto 5,\, 3\mapsto 2,\, 4\mapsto 1,\, 5\mapsto 3,\,
6\mapsto 4.$$
In this case $a=-\tfrac{1}{2}$, $b=\tfrac{1}{2}$, $c=-\tfrac{3}{2}$,
$d=-\tfrac{1}{2}$.

So we try to find $\sigma\in G$ with $\sigma(h_i)=\bar h_i$,
$\sigma(x_i)=\bar x_i$, $\sigma(y_i) = \bar y_i$ for $1\leq i\leq 4$,
$\sigma(h_5) = ah_5+bh_6$, $\sigma(h_6) = ch_5+dh_6$ (with $a,b,c,d$ as above)
and $\sigma(v_j) = \lambda_j \bar v_{\pi(j)}$ with $\pi$ as above. In other
words, we try to find all $(1,\pi)$-extensions. 

We have written an ad-hoc program in {\sf GAP}4 for computing the polynomial
equations that the $\lambda_i$ have to satisfy in order that $\sigma$ be
an automorphism. It took the program somewhat more than 13 seconds to compute
these equations. The ensuing Gr\"obner basis computation terminated almost
immediately. Setting $\lambda_1= \lambda_3=1$ the equations yield exactly
one solution.

Doing this for all elements of $Z_{A_2}(h,e,f)$ we obtain exactly six elements
of $Z_G(h,e,f)$ and it follows that the component group is isomorphic to
$S_X$. 
\end{example}

\section{Tables for the exceptional types}\label{sec:tables}

In this section we give some data relative to the nilpotent orbits of the
Lie algebras of exceptional type, whose stabilizers have nontrivial
component group (Tables \ref{tab:G2}, \ref{tab:F4}, \ref{tab:E6}, \ref{tab:E7},
\ref{tab:E8}). The first column of these tables displays the Bala Carter
label of the orbit (see \cite[Chapter 8]{colmcgov}).
The second and fourth columns have
respectively the isomorphism type of the subalgebras $\c_1'$ and $\c_2'$.
Here a notation like $2A_2$ means a direct sum of two subalgebras, both
of type $A_2$. The third column has the dimension of the centre $\ttt$.
The fifth column has the highest weights of the $\c$-module $V$. Here the
notation works as follows. The canonical generators of $\c_1'$ are
$h_i,x_i,y_i$ with $1\leq i\leq a$. The basis elements of $\ttt$ are
$h_i$ with $a+1\leq i\leq a+d$. The canonical generators of $\c_2'$ are
$h_i,x_i,y_i$ with $a+d+1\leq i\leq r$. The weights are written as 
$(w_1 ; w_2; w_3 )$ where $w_1$ has the eigenvalues of the $h_i$, $1\leq i\leq a$
on the highest weight vector, $w_2$ (written in boldface) has the eigenvalues
of the $h_i$, $a+1\leq i\leq a+d$ on the highest weight vector, $w_3$ has the
eigenvalues of the $h_i$, $a+d+1\leq i\leq r$ on the highest weight vector.
If $\dim \c_1'=0$ or $\dim \ttt=0$ then we just write $(w_2;w_3)$, respectively
$(w_1;w_3)$. The eigenvalues of direct summands of $\c_1'$, $\c_2'$ are
separated by commas. For the enumeration of the nodes of the Dynkin diagrams
of the simple Lie algebra we follow the conventions in
\cite[Planche I-IX]{bou4}.
All weights are of multiplicity one (i.e.,
each weight belongs to a unique irreducible summand of $V$).
The last column has the isomorphism type of the
component group of the stabilizer (which is contained in the tables of
\cite[Chapter 8]{colmcgov}, \cite{sommers}). The data in the second, third,
fourth and fifth
columns have been computed with {\sf GAP}, with the help of the
{\sf SLA} package.

\begin{longtable}{|l|l|l|l|l|}
\caption{Nilpotent orbits in the Lie algebra of type $G_2$ whose stabilizer
  has a nontrivial component group}\label{tab:G2} 
\endfirsthead
\hline
\endhead
\hline
\endfoot
\endlastfoot

\hline
label & $\c_1'$ & $\dim \ttt$ & $\c_2'$ & $A$\\
\hline
$G_2(a_1)$ & 0 & 0 & $\g$ & $S_3$\\
\hline  
\end{longtable}

\begin{longtable}{|l|l|l|l|l|l|}
\caption{Nilpotent orbits in the Lie algebra of type $F_4$ whose stabilizer
  has a nontrivial component group}\label{tab:F4} 
\endfirsthead
\hline
\endhead
\hline
\endfoot
\endlastfoot

\hline
label & $\c_1'$ & $\dim \ttt$ & $\c_2'$ & weights of $V$ & $A$\\
\hline
$\widetilde{A}_1$ & $A_3$ & 0 & $A_1$ & $\wts{100}{1}$, $\wts{001}{1}$,
$\wts{010}{2}$ & $S_2$\\
$A_2$ & $A_2$ & 0 & $A_2$& $\wts{20}{01}$, $\wts{02}{10}$ & $S_2$\\
$B_2$ & $2A_1$ & 0 & $B_2$ & $\wts{1,1}{10}$, $\wts{1,0}{01}$, $\wts{0,1}{01}$ &
$S_2$\\
$C_3(a_1)$ & $A_1$ & 0 & $C_3$ & $\wts{1}{001}$ & $S_2$\\
$F_4(a_3)$ & 0 & 0 & $\g$ & & $S_4$ \\
$F_4(a_2)$ & 0 & 0 & $\g$ & &  $S_2$\\
$F_4(a_1)$ & 0 & 0 & $\g$ & &  $S_2$\\
\hline  
\end{longtable}  

\begin{longtable}{|l|l|l|l|l|l|}
\caption{Nilpotent orbits in the Lie algebra of type $E_6$ whose stabilizer
  has a nontrivial component group}\label{tab:E6}  
\endfirsthead
\hline
\endhead
\hline
\endfoot
\endlastfoot

\hline
label & $\c_1'$ & $\dim\ttt$ & $\c_2'$ & weights of $V$ & $A$ \\
\hline
$A_2$ & $2A_2$ & 0 & $A_2$ & $\wts{10,01}{01}$, $\wts{01,10}{10}$ & $S_2$\\
$D_4(a_1)$ & 0 & 2 & $D_4$ & $\wts{{\mathbf{-2,0}}}{1000}$,
$\wts{{\mathbf{2,0}}}{1000}$ &  $S_3$\\
& & & &  $\wts{{\mathbf{-1,-3}}}{0010}$,  $\wts{{\mathbf{1,3}}}{0010}$ & \\
& & & &   $\wts{{\mathbf{-1,3}}}{0001}$,  $\wts{{\mathbf{1,-3}}}{0001}$ &\\
$E_6(a_3)$ & 0 & 0 & $\g$ & & $S_2$ \\
\hline  
\end{longtable}  

\begin{longtable}{|l|l|l|l|l|l|}
\caption{Nilpotent orbits in the Lie algebra of type $E_7$ whose stabilizer
  has a nontrivial component group}\label{tab:E7} 
\endfirsthead
\hline
\endhead
\hline
\endfoot
\endlastfoot

\hline
label & $\c_1'$ & $\dim\ttt$ & $\c_2'$ & weights of $V$ & $A$\\
\hline
$A_2$ & $A_5$ & 0 & $A_2$ & $\wts{00010}{10}$, $\wts{01000}{01}$ & $S_2$\\
$A_2+A_1$ & $A_3$ & 1 & $A_1+A_2$ & $\wtt{000}{4}{0,01}$, $\wtt{000}{-4}{0,10}$ &
$S_2$\\
& & & & $\wtt{010}{-2}{0,01}$, $\wtt{010}{2}{0,10}$ & \\
& & & & $\wtt{001}{-3}{1,00}$, $\wtt{100}{3}{1,00}$ & \\
& & & & $\wtt{001}{1}{1,01}$, $\wtt{100}{-1}{1,10}$ & \\
$D_4(a_1)$ & $3A_1$ & 0 & $D_4$ & $\wts{1,1,0}{1000}$, $\wts{1,0,1}{0010}$,
$\wts{0,1,1}{0001}$ & $S_3$\\
$D_4(a_1)+A_1$ & $2A_1$ & 0 & $A_1+D_4$ & $\wts{1,1}{0,1000}$,
$\wts{1,0}{1,0001}$, $\wts{0,1}{1,0010}$ & $S_2$ \\
$A_3+A_2$ & $A_1$ & 1 &  $A_2+A_3$ & $\wtt{0}{-4}{01,000}$, $\wtt{0}{4}{10,000}$
& $S_2$\\
& & & & $\wtt{1}{-3}{00,001}$, $\wtt{1}{3}{00,100}$ & \\
& & & & $\wtt{0}{2}{01,010}$, $\wtt{0}{-2}{10,010}$ & \\
& & & & $\wtt{1}{-1}{01,100}$, $\wtt{1}{1}{10,001}$ & \\
$A_4$ & $A_2$ & 1 & $A_4$ & $\wtt{00}{-3}{0001}$, $\wtt{00}{3}{1000}$ & $S_2$\\
& & & & $\wtt{01}{2}{0001}$, $\wtt{10}{-2}{1000}$ & \\
& & & & $\wtt{01}{-1}{0010}$, $\wtt{10}{2}{0100}$ & \\
$A_4+A_1$ & 0 & $2$ & $A_1+A_4$ & $\wts{{\mathbf{3,-1}}}{1,0000}$,  $\wts{{\mathbf{-3,1}}}{1,0000}$ &  $S_2$\\
& & & &  $\wts{{\mathbf{0,-2}}}{0,1000}$,  $\wts{{\mathbf{0,2}}}{0,0001}$ &\\
& & & &  $\wts{{\mathbf{-2,2}}}{0,1000}$,  $\wts{{\mathbf{2,-2}}}{0,0001}$ &\\
& & & &  $\wts{{\mathbf{1,1}}}{1,1000}$,  $\wts{{\mathbf{-1,-1}}}{1,0001}$ &\\
& & & &  $\wts{{\mathbf{-2,0}}}{0,0100}$,  $\wts{{\mathbf{2,0}}}{0,0010}$ &\\
& & & &  $\wts{{\mathbf{1,-1}}}{1,0100}$,  $\wts{{\mathbf{-1,1}}}{1,0010}$ &\\
$D_5(a_1)$ & $A_1$ & $1$ & $D_5$ & $\wtt{0}{2}{100000}$, $\wtt{0}{-2}{10000}$ &
$S_2$ \\
& & & & $\wtt{1}{-1}{000010}$, $\wtt{0}{1}{00001}$ & \\
$E_6(a_3)$ & $A_1$ & 0 & $F_4$ & $\wts{2}{0001}$ & $S_2$ \\
$E_7(a_5)$ & 0 & 0 & $\g$ & & $S_3$\\
$E_6(a_1)$ & 0 & 1 & $E_6$ & $\wts{\mathbf{-2}}{000001}$,
$\wts{\mathbf{2}}{100000}$ & $S_2$\\
$E_7(a_3)$ & 0 & 0 & $\g$ & & $S_2$ \\
\hline  
\end{longtable}

\begin{longtable}{|l|l|l|l|l|l|}
\caption{Nilpotent orbits in the Lie algebra of type $E_8$ whose stabilizer
  has a nontrivial component group}\label{tab:E8} 
\endfirsthead
\hline
\endhead
\hline
\endfoot
\endlastfoot

\hline
label & $\c_1'$ & $\dim \ttt$ & $\c_2'$ & weights of $V$ & $A$\\
\hline
$A_2$ & $E_6$ & 0 & $A_2$ & $\wts{100000}{10}$, $\wts{000001}{01}$ & $S_2$\\
$A_2+A_1$ & $A_5$ & 0 & $A_1+A_2$ & $\wts{10000}{1,10}$, $\wts{00001}{1,01}$ &
$S_2$\\
& & & &  $\wts{01000}{0,01}$, $\wts{00010}{0,10}$ & \\
& & & &  $\wts{00100}{1,00}$ & \\
$2A_2$ & $2G_2$ & 0 & $A_1$ & $\wts{10,10}{2}$, $\wts{10,00}{4}$,
$\wts{00,10}{4}$ & $S_2$\\
$D_4(a_1)$ & $D_4$ & 0 & $D_4$ & $\wts{1000}{1000}$, $\wts{0010}{0010}$,
$\wts{0001}{0001}$ & $S_3$\\
$D_4(a_1)+A_1$ & $3A_1$ & 0 & $A_1+D_4$ & $\wts{1,1,0}{0,1000}$,
$\wts{1,0,1}{0,0010}$, $\wts{0,1,1}{0,0001}$ & $S_3$\\
& & & & $\wts{1,0,0}{1,0001}$,$\wts{0,1,0}{1,0010}$, $\wts{0,0,1}{1,1000}$ &\\
& & & &  $\wts{1,1,1}{1,0000}$ & \\
$A_3+A_2$ & $B_2$ & 1 & $A_2+B_2$ & $\wtt{00}{-4}{01,00}$, $\wtt{00}{4}{10,00}$ &
$S_2$\\
& & & & $\wtt{00}{2}{01,10}$, $\wtt{00}{-2}{10,10}$ & \\
& & & & $\wtt{10}{2}{01,00}$, $\wtt{10}{-2}{10,00}$ & \\
& & & & $\wtt{01}{3}{00,01}$, $\wtt{01}{-3}{00,01}$ & \\
& & & & $\wtt{01}{-1}{01,01}$, $\wtt{01}{1}{10,01}$ & \\
& & & & $\wtt{10}{0}{00,10}$ & \\
$A_4$ & $A_4$ & 0 & $A_4$ & $\wts{1000}{0100}$, $\wts{0001}{0010}$ & $S_2$\\
& & & & $\wts{0010}{1000}$, $\wts{0100}{0001}$ & \\
$D_4(a_1)+A_2$ & $A_2$ & 0 & $A_1$ & $\wts{30}{4}$, $\wts{03}{4}$, $\wts{22}{2}$,
$\wts{11}{6}$ & $S_2$\\
$A_4+A_1$ & $A_2$ & 1 & $A_1+A_4$ & $\wtt{00}{-6}{0,0001}$,
$\wtt{00}{6}{0,1000}$ & $S_2$\\
& & & & $\wtt{01}{-5}{1,0000}$, $\wtt{10}{5}{1,0000}$ &\\
& & & & $\wtt{10}{-4}{0,1000}$, $\wtt{01}{4}{0,0001}$ &\\
& & & & $\wtt{00}{-3}{1,0100}$, $\wtt{00}{3}{1,0010}$ &\\
& & & & $\wtt{10}{-1}{1,0001}$, $\wtt{01}{1}{1,1000}$ &\\
& & & & $\wtt{01}{-2}{0,0010}$, $\wtt{10}{2}{0,0100}$ &\\
$D_5(a_1)$ & $A_3$ & 0 & $D_5$ & $\wts{100}{00010}$, $\wts{001}{00001}$ &
$S_2$\\
&&&& $\wts{010}{10000}$ & \\
$A_4+2A_1$ & $A_1$ & 1  & $A_1+A_4$ & $\wtt{1}{-5}{1,0000}$, $\wtt{1}{5}{1,0000}$
& $S_2$\\
&&&&  $\wtt{0}{-4}{0,0100}$, $\wtt{0}{4}{0,0010}$ & \\
&&&&  $\wtt{2}{-2}{0,1000}$, $\wtt{2}{2}{0,0001}$ & \\
&&&&  $\wtt{0}{-2}{2,1000}$, $\wtt{0}{2}{2,0001}$ & \\
&&&&  $\wtt{1}{-3}{1,0001}$, $\wtt{1}{3}{1,1000}$ & \\
&&&&  $\wtt{1}{-1}{1,0010}$, $\wtt{1}{1}{1,0100}$ & \\
&&&&  $\wtt{2}{0}{2,0000}$ & \\
$D_4+A_2$ & $A_2$ & 0 & $A_2+G_2$ & $\wts{20}{10,00}$, $\wts{02}{01,00}$ & 
$S_2$\\
&&&& $\wts{10}{01,10}$, $\wts{01}{10,10}$ & \\
&&&& $\wts{11}{00,10}$ & \\
$E_6(a_3)$ & $G_2$ & 0 & $F_4$ & $\wts{10}{0001}$ & $S_2$\\
$D_6(a_2)$ & $2A_1$ & 0 & $D_6$ & $\wts{10}{000010}$, $\wts{01}{000001}$ &
$S_2$ \\
&&&& $\wts{11}{000001}$ & \\
$E_6(a_3)+A_1$ & $A_1$ & 0 & $A_1+F_4$ & $\wts{3}{1,0000}$, $\wts{2}{0,0001}$,
$\wts{1}{1,0001}$ & $S_2$\\
$E_7(a_3)$ & $A_1$ & 0 & $E_7$ & $\wts{1}{0000001}$ & $S_3$\\
$E_8(a_7)$ & 0 & 0 & $\g$ & & $S_5$ \\
$D_6(a_1)$ & $2A_1$ & 0 & $D_6$ & $\wts{1,1}{100000}$, $\wts{1,0}{000010}$,
$\wts{0,1}{000001}$ & $S_2$\\
$E_7(a_4)$ & $A_1$ & 0 & $E_7$ & $\wts{1}{0000001}$ & $S_2$ \\
$E_6(a_1)$ & $A_2$ & 0 & $E_6$ & $\wts{10}{000001}$, $\wts{01}{100000}$ &
$S_2$\\
$D_5+A_2$ & 0 & 1 & $A_2+D_5$ & $\wts{\mathbf{-4}}{01,00000}$,
$\wts{\mathbf{4}}{10,00000}$ & $S_2$ \\
&&&&  $\wts{\mathbf{-3}}{00,00001}$, $\wts{\mathbf{3}}{00,00010}$ &\\
&&&&  $\wts{\mathbf{-2}}{10,10000}$, $\wts{\mathbf{2}}{01,10000}$ &\\
&&&&  $\wts{\mathbf{-1}}{01,00010}$, $\wts{\mathbf{1}}{10,00001}$ &\\
$D_7(a_2)$ & 0 & 1  & $D_7$ & $\wts{\mathbf{-2}}{1000000}$, $\wts{\mathbf{2}}{1000000}$ & $S_2$\\
&&&& $\wts{\mathbf{-1}}{0000010}$, $\wts{\mathbf{1}}{0000001}$ &\\
$E_6(a_1)+A_1$ & 0 &  1 & $A_1+E_6$ & $\wts{\mathbf{-3}}{1,000000}$, $\wts{\mathbf{3}}{1,000000}$ & $S_2$\\
&&&&  $\wts{\mathbf{-2}}{0,000001}$, $\wts{\mathbf{2}}{0,100000}$ & \\
&&&&  $\wts{\mathbf{-1}}{1,100000}$, $\wts{\mathbf{1}}{1,000001}$ & \\
$E_7(a_3)$ & $A_1$ & 0 & $E_7$ & $\wts{1}{0000001}$ & $S_2$\\
$E_8(b_6)$ & 0 & 0 & $\g$ & & $S_3$\\
$D_7(a_1)$ & 0 & 1 & $D_7$ & $\wts{\mathbf{-2}}{100000}$, $\wts{\mathbf{2}}{1000000}$ &  $S_2$\\
&&&&  $\wts{\mathbf{-i}}{000001}$, $\wts{\mathbf{i}}{01000010}$ & \\
$E_8(a_6)$ & 0 & 0 & $\g$ & & $S_3$\\
$E_8(b_5)$ & 0 & 0 & $\g$ & & $S_3$\\
$E_8(a_5)$ & 0 & 0 & $\g$ & & $S_2$\\
$E_8(b_4)$ & 0 & 0 & $\g$ & & $S_2$\\
$E_8(a_4)$ & 0 & 0 & $\g$ & & $S_2$\\
$E_8(a_3)$ & 0 & 0 & $\g$ & & $S_2$\\
\hline  
\end{longtable}


\begin{thebibliography}{10}

\bibitem{alekseevski}
A.~V. Alekseevski\u{\i}.
\newblock Component groups of centralizers of unipotent elements in semisimple
  algebraic groups.
\newblock {\em Akad. Nauk Gruzin. SSR Trudy Tbiliss. Mat. Inst. Razmadze},
  62:5--27, 1979.
\newblock Collection of articles on algebra, 2.

\bibitem{borel}
Armand Borel.
\newblock {\em Linear algebraic groups}.
\newblock Springer-Verlag, Berlin, Heidelberg, New York, second edition, 1991.

\bibitem{borwdg}
Mikhail Borovoi and Willem A.~de Graaf.
\newblock Computing {G}alois cohomology of a real linear algebraic group.
\newblock {\tt arXiv:2308.04962 [math.RT], 2023}.

\bibitem{magma}
Wieb Bosma, John Cannon, and Catherine Playoust.
\newblock The {M}agma algebra system. {I}. {T}he user language.
\newblock {\em J. Symbolic Comput.}, 24(3-4):235--265, 1997.
\newblock Computational algebra and number theory (London, 1993).

\bibitem{bou4}
N.~Bourbaki.
\newblock {\em {Groupes et Alg\`{e}bres de Lie, Chapitres 4, 5 et 6}}.
\newblock Hermann, Paris, 1968.

\bibitem{colmcgov}
David~H. Collingwood and William~M. McGovern.
\newblock {\em Nilpotent orbits in semisimple {L}ie algebras}.
\newblock Van Nostrand Reinhold Mathematics Series. Van Nostrand Reinhold Co.,
  New York, 1993.

\bibitem{clo}
David~A. Cox, John Little, and Donal O'Shea.
\newblock {\em Ideals, varieties, and algorithms}.
\newblock Undergraduate Texts in Mathematics. Springer, Cham, fourth edition,
  2015.
\newblock An introduction to computational algebraic geometry and commutative
  algebra.

\bibitem{DGPS}
Wolfram Decker, Gert-Martin Greuel, Gerhard Pfister, and Hans Sch\"onemann.
\newblock {\sc Singular} {4-0-2} --- {A} computer algebra system for polynomial
  computations, 2015.

\bibitem{gap4}
The GAP~Group.
\newblock {\em {GAP -- Groups, Algorithms, and Programming, Version 4.11.1}},
  2021.

\bibitem{sla}
W.~A.~de Graaf and T.~GAP~Team.
\newblock {SLA}, computing with simple {L}ie algebras, {V}ersion 1.6.2.
\newblock
  {\texttt{https://gap\texttt{\symbol{45}}packages.github.io/}\discretionary
  {}{}{}\texttt{sla/}}, Jul 2024.
\newblock GAP package.

\bibitem{gra14}
Willem A.~de Graaf.
\newblock Computing with nilpotent orbits in simple {L}ie algebras of
  exceptional type.
\newblock {\em LMS J. Comput. Math.}, 11:280--297 (electronic), 2008.

\bibitem{gra16}
Willem A.~de Graaf.
\newblock {\em Computation with linear algebraic groups}.
\newblock Monographs and Research Notes in Mathematics. CRC Press, Boca Raton,
  FL, 2017.

\bibitem{hum2}
James~E. Humphreys.
\newblock {\em {Linear Algebraic Groups}}.
\newblock Springer Verlag, New York, Heidelberg, Berlin, 1975.

\bibitem{hum}
James~E. Humphreys.
\newblock {\em Introduction to {L}ie algebras and representation theory},
  volume~9 of {\em Graduate Texts in Mathematics}.
\newblock Springer-Verlag, New York-Berlin, 1978.
\newblock Second printing, revised.

\bibitem{jac}
Nathan Jacobson.
\newblock {\em Lie algebras}.
\newblock Interscience Tracts in Pure and Applied Mathematics, No. 10.
  Interscience Publishers (a division of John Wiley \& Sons), New York-London,
  1962.

\bibitem{jannilp}
Jens~Carsten Jantzen.
\newblock Nilpotent orbits in representation theory.
\newblock In {\em Lie theory}, volume 228 of {\em Progr. Math.}, pages 1--211.
  Birkh\"{a}user Boston, Boston, MA, 2004.

\bibitem{lawtest}
R.~Lawther and D.~M. Testerman.
\newblock Centres of centralizers of unipotent elements in simple algebraic
  groups.
\newblock {\em Mem. Amer. Math. Soc.}, 210(988), 2011.

\bibitem{losevW}
Ivan Losev.
\newblock Finite-dimensional representations of {$W$}-algebras.
\newblock {\em Duke Math. J.}, 159(1):99--143, 2011.

\bibitem{onishchik}
Arkady~L. Onishchik.
\newblock {\em Lectures on Real Semisimple {L}ie Algebras and Their
  Representations}.
\newblock European Mathematical Society, Z\"urich, 2004.

\bibitem{sommers}
Eric Sommers.
\newblock A generalization of the {B}ala-{C}arter theorem for nilpotent orbits.
\newblock {\em Internat. Math. Res. Notices}, (11):539--562, 1998.

\bibitem{steinberg75}
Robert Steinberg.
\newblock Torsion in reductive groups.
\newblock {\em Advances in Math.}, 15:63--92, 1975.

\end{thebibliography}

\end{document}